\newcommand{\R}{\mathbb{R}}
\newcommand{\E}{\mathbb{E}}
\newcommand{\Q}{\mathscr{Q}}
\newcommand{\N}{\mathbb{N}}
\newcommand{\Z}{\mathbb{Z}}
\newcommand{\F}{\mathscr{F}}
\newcommand{\Pro}{\mathbb{P}}
\newcommand{\A}{\mathscr{A}}
\newcommand{\D}{\mathscr{D}}
\newcommand{\epsb}{\partial_\varepsilon}
\newcommand{\G}{\mathscr{G}}
\newcommand{\lev}{\text{lev}}
\newtheorem{theorem}[equation]{Theorem}
\newtheorem*{theorem*}{Theorem}
\newtheorem{lemma}[equation]{Lemma}
\newtheorem{proposition}[equation]{Proposition}
\theoremstyle{definition}
\newtheorem{defin}[equation]{Definition}
\newtheorem*{defin*}{Definition}
\newtheorem{remark}[equation]{Remark}
\newtheorem*{remark*}{Remark}
\numberwithin{equation}{section}
\title{Adjacent dyadic systems and the $L^p$-boundedness of shift operators in metric spaces revisited}
\author{Olli Tapiola}
\address{Olli Tapiola, Department of Mathematics and Statistics, P.O.B. 68 (Gustaf H\"allstr\"omin katu 2b), FI-00014 University of Helsinki, Finland}
\email{olli.tapiola@helsinki.fi}
\date{\today}
\keywords{metric space, adjacent dyadic systems, shift operator, UMD}
\subjclass[2010]{30L99 (Primary); 46E40 (Secondary)}
\begin{document}

\begin{abstract}
  With the help of recent adjacent dyadic constructions by Hytönen and the author, we give an alternative proof of results of Lechner, Müller and Passenbrunner
  about the $L^p$-boundedness of shift operators acting on functions $f \in L^p(X;E)$ where $1 < p < \infty$, $X$ is a metric space and $E$ is a UMD space.
\end{abstract}

\maketitle

\section{Introduction}

During the last two decades, the highly influential $T(1)$ theorem of G. David and J.-L. Journé \cite{davidjourne} has been generalized to various 
settings by different authors (e.g. \cite{frazieretall, hanhofmann}). One of these generalizations was due to T. Figiel (\cite{figiel_singular, figiel_haar}, different 
proof by T. Hytönen and L. Weis \cite{hytonenweis}) who proved the theorem for UMD-valued functions $f \in L^p(\R^d; E)$ and scalar-valued kernels using a clever 
observation that any Caldéron-Zygmund operator on $\R^d$ can be decomposed into sums and products of Haar shifts (or rearragements), Haar multipliers and paraproducts. Not
long ago, P.F.X Müller and M. Passenbrunner \cite{mullerpassenbrunner} extended this technique from the Euclidean setting to metric spaces
to prove the $T(1)$ theorem for UMD-valued functions $f \in L^p (X;E)$, where $X$ is a normal space of homogeneous type (see Theorems 2 and 3 in \cite{maciassegovia}). One 
of the key elements of their (and Figiel's) proof - the $L^p$-boundedness of the shift operators - was revisited and simplified by R. Lechner and Passenbrunner in their 
recent paper \cite{lechnerpassenbrunner} by proving the result in a more general form with different techniques.

Roughly speaking, a shift operator permutates the generating Haar functions in such a way that if $h_Q \mapsto h_P$, then the dyadic cubes $P$ and $Q$ 
are not too far away from each other and they belong to the same generation of the given dyadic system. On the real line, this can be expressed in a very simple
form: for every $m \in \Z$, the shift operator $T_m$ is the linear extension of the map $h_I \mapsto h_{I + m|I|}$. In \cite[Theorem 1]{figiel_haar}, Figiel showed 
that for UMD-valued functions $f \colon [0,1] \to E$ and for every $p \in (1,\infty)$ we have the norm estimate
\begin{eqnarray}
  \label{norm_estimate_shift_operator} \| T_m f \|_p \le C  \log\left(2+|m| \right)^\alpha \|f\|_p
\end{eqnarray}
where $\alpha < 1$ depends only on $E$ and $p$, and the constant $C$ depends on $E$, $p$ and $\alpha$ (the same result was formulated for functions $f \colon \R^d \to E$ in 
\cite[Lemma 1]{figiel_singular}). In \cite[Sections 4.3 - 4.5]{mullerpassenbrunner}, Müller and Passenbrunner generalized the definition of shift operators for Christ-type dyadic systems \cite{christ} 
in quasimetric spaces and proved the corresponding $L^p$-estimate for these generalized operators, among other things. Lechner and Passenbrunner then generalized the 
definition further and gave an alternative proof for this norm estimate by providing a way to modify the underlying dyadic system.

In this paper, we revisit and improve some results related to the recent metric adjacent dyadic constructions by Hytönen and the author \cite{hytonentapiola} and give a 
proof for the estimate \eqref{norm_estimate_shift_operator} for UMD-valued functions $f \colon X \to E$ as an application. Our central idea is that with the help of 
adjacent dyadic systems we can split a given dyadic system $\D$ into suitable subcollections $\D_\lambda$ that give us a covenient way to approximate certain 
indicator functions by their conditional expectations. This approximation technique combined with some classical results of UMD-valued analysis give us a fairly 
straightforward proof of the $L^p$ estimate.

\subsection*{Aknowledgements}

This paper is part of the author's PhD project written under the supervision of Professor Tuomas Hytönen.
The author is supported by the European Union through T. Hytönen's ERC Starting Grant 278558 ``Analytic-probabilistic methods for borderline singular integrals'' and 
he is part of Finnish Centre of Excellence in Analysis and Dynamics Research. 

\section{Dyadic cubes, conditional expectations and UMD spaces}

\subsection{Geometrically doubling metric spaces}

Let $(X,d)$ be a geometrically doubling metric space. That is, there exists a constant $M$ such that every ball 
$B(x,r) \coloneqq \{y \in X \colon d(x,y) < r\}$ can be covered by at most $M$ balls of radius $r/2$. In this subsection we do 
not assume any measurability of $(X,d)$ but we note that if $(Y,d',\mu)$ is a doubling metric measure space, then $(Y,d')$ is a 
geometrically doubling metric space.

We use the following two standard lemmas repeatedly in different proofs without referring to them every time we use them.

\begin{lemma}[{\cite[Lemma 2.3]{hytonenframework}}]
  \label{dms_properties}
  The following properties hold for $(X,d)$:
  \begin{enumerate}
   \item[$1)$] Any ball $B(x,r)$ can be covered by at most $\lfloor M\delta^{-\log_2 M} \rfloor$ balls $B(x_i, \delta r)$ for every 
               $\delta \in (0,1]$.
   \item[$2)$] Any ball $B(x,r)$ contains at most $\lfloor M\delta^{-\log_2 M} \rfloor$ centres $x_i$ of pairwise disjoint
               balls $B(x_i,\delta r)$ for every $\delta \in (0,1]$.
  \end{enumerate}
\end{lemma}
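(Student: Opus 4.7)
The plan is to prove both assertions by iterating the geometric doubling hypothesis that every ball is coverable by $M$ balls of half the radius, then reading off the exponent $\log_2 M$ from this iteration.

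For part (1) I would choose the smallest nonnegative integer $k$ with $2^{-k} \le \delta$, so that $k \le 1 + \log_2(1/\delta)$. Iterating the doubling hypothesis $k$ times starting from $\{B(x,r)\}$, one covers $B(x,r)$ by at most $M^k$ balls of radius $r/2^k \le \delta r$, which can be enlarged to radius $\delta r$ at no cost. The claimed bound then follows from
\[
  M^k \;\le\; M^{1+\log_2(1/\delta)} \;=\; M\,\delta^{-\log_2 M}.
\]

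For part (2) I would deduce the packing estimate from the covering estimate in (1). The key observation is that disjointness of the balls $B(x_i,\delta r)$ forces $d(x_i,x_j) \ge \delta r$ whenever $i \ne j$, because $x_i \in B(x_i,\delta r)$ cannot lie in $B(x_j,\delta r)$. Applying part (1) with parameter $\delta/2$, one covers $B(x,r)$ by a controlled number of balls of radius $(\delta/2)r$; no such ball can contain two of the centres $x_i, x_j$, since the triangle inequality would then yield $d(x_i,x_j) < \delta r$. The cardinality of $\{x_i\}$ is therefore at most the cardinality of this cover.

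I do not foresee any conceptual obstacle; the argument is entirely elementary. The only point demanding care is the bookkeeping of constants: a direct application of (1) with parameter $\delta/2$ yields the bound $M(\delta/2)^{-\log_2 M} = M^2\delta^{-\log_2 M}$, which is a factor $M$ larger than the one stated in the lemma. Matching the stated constant exactly would require either optimizing the choice of $k$ in part (1) for the particular radius encountered in part (2), or a slightly sharper packing argument; in any case, the essential growth rate $\delta^{-\log_2 M}$ is immediate from the iteration scheme.
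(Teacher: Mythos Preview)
The paper does not supply its own proof of this lemma; it merely cites \cite[Lemma 2.3]{hytonenframework} and uses the result as a black box. There is therefore nothing in the paper to compare your argument against.

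Your proof of part (1) is the standard iteration argument and is correct, including the constant: with $k = \lceil \log_2(1/\delta) \rceil$ one has $M^k \le M\delta^{-\log_2 M}$, and the floor is justified because the left-hand side is an integer. Your approach to part (2)---separation of centres plus a covering by balls of radius $\tfrac{\delta}{2}r$---is also the standard one, and you have correctly identified that it yields $M^2\delta^{-\log_2 M}$ rather than $M\delta^{-\log_2 M}$. This discrepancy is genuine: the naive reduction to (1) does lose a factor of $M$, and recovering the exact constant stated requires a slightly more careful accounting (for instance, first observing that any ball of radius $s$ contains at most $M$ points that are pairwise $s$-separated, and then combining this with an iteration stopped one step earlier). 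For the purposes of this paper the distinction is immaterial, since only the qualitative bound---finitely many, depending on $M$ and $\delta$---is ever used; your honest flagging of the constant is appropriate.
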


\begin{lemma}[{\cite[Lemma 2.2]{hytonentapiola}}]
  \label{maximal_subsets}
  For any $\delta > 0$ there exists a countable \emph{maximal $\delta$-separated set} $\A_\delta \subseteq X$:
     \begin{enumerate}
       \item[$\bullet$] $d(x,y) \ge \delta$ for every $x,y \in \A_\delta$, $x \neq y$
       \item[$\bullet$] $\underset{x \in \A_\delta}{\min} \ d(x,z) < \delta$ for every $z \in X$.
     \end{enumerate}
\end{lemma}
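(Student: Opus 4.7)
The plan is to build $\A_\delta$ by a standard greedy/Zorn argument and then to extract countability from the geometric doubling hypothesis via Lemma~\ref{dms_properties}. Concretely, I would consider the collection $\mathscr{P}$ of all $\delta$-separated subsets of $X$, partially ordered by inclusion. It is nonempty (any singleton belongs to it), and every totally ordered chain $\{S_\alpha\}_\alpha$ in $\mathscr{P}$ has $\bigcup_\alpha S_\alpha$ as an upper bound that still lies in $\mathscr{P}$: any two distinct points in the union already lie in a common member of the chain, so their distance is at least $\delta$. Zorn's lemma then furnishes a maximal element $\A_\delta \in \mathscr{P}$.

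Next I would verify the two bullet points. The first (separation) is immediate from $\A_\delta \in \mathscr{P}$. For the second, I argue by contradiction: if some $z \in X$ satisfied $d(x, z) \ge \delta$ for every $x \in \A_\delta$, then $\A_\delta \cup \{z\}$ would still be $\delta$-separated and strictly larger than $\A_\delta$, contradicting maximality; in the remaining case $z \in \A_\delta$ the minimum is trivially $0 < \delta$.

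Finally, to obtain countability I would fix any $x_0 \in X$ and write $X = \bigcup_{n \ge 1} B(x_0, n)$. Since $\A_\delta$ is $\delta$-separated, the balls $\{B(x, \delta/2) : x \in \A_\delta \cap B(x_0,n)\}$ are pairwise disjoint and centred in $B(x_0,n)$, so Lemma~\ref{dms_properties}(2) applied to $B(x_0, n)$ with scale parameter $\delta/(2n) \in (0,1]$ (valid once $n \ge \delta/2$) provides a finite upper bound for $|\A_\delta \cap B(x_0, n)|$. Hence $\A_\delta$ is a countable union of finite sets and therefore countable. I do not foresee any real obstacle: the geometry of $X$ enters only in this last step, and the hardest (and mildest) point is simply to choose the correct relative scale in Lemma~\ref{dms_properties}(2) so that the packing bound is actually finite.
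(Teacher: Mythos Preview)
The paper does not supply its own proof of this lemma; it is quoted verbatim from \cite[Lemma~2.2]{hytonentapiola} and used as a black box. Your argument is correct and is the standard one: Zorn's lemma produces a maximal $\delta$-separated set, maximality forces the covering property, and geometric doubling via Lemma~\ref{dms_properties}(2) makes $\A_\delta$ locally finite and hence countable. There is nothing to compare against here, and your proof would serve perfectly well as a self-contained justification.
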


Since the center points of dyadic cybes (see Theorem \ref{thm:dyadic_systems} below) form $\delta^k$-separated sets, the following 
simple lemma is a convenient tool for splitting dyadic systems into smaller sparse systems. We will use the lemma later in Section 
\ref{section:embedding_cubes}.
\begin{lemma}
  \label{lemma:separating_maximal_collections}
  Let $D_2 \ge D_1 > 0$ and let $Z$ be a $D_1$-separated set of points in the space $X$. Then $Z$ is a disjoint union of at most $N$ $D_2$-separated 
  sets where $N$ depends only on $M$ and $D_1 / D_2$.
\end{lemma}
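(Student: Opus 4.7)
The plan is to argue by graph coloring: define a graph $G$ on the vertex set $Z$ by joining $y,z\in Z$ whenever $0 < d(y,z) < D_2$, and show that $G$ has bounded degree, so a greedy coloring gives the desired partition.

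First I would bound the degree. Fix $z_0 \in Z$ and consider $Z \cap B(z_0, D_2)$. Because $Z$ is $D_1$-separated, the balls $\{B(z, D_1/2)\}_{z \in Z}$ are pairwise disjoint, so the points of $Z \cap B(z_0, D_2)$ are centers of pairwise disjoint balls of radius $D_1/2$ inside a ball of radius $D_2$. Since $D_2 \ge D_1$, we have $\delta := D_1/(2D_2) \le 1/2 \le 1$, so Lemma \ref{dms_properties}(2) applies and gives
\begin{equation*}
  \#\bigl( Z \cap B(z_0,D_2) \bigr) \le \bigl\lfloor M \delta^{-\log_2 M} \bigr\rfloor =: K,
\end{equation*}
a quantity depending only on $M$ and $D_1/D_2$. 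In particular, the degree of each vertex in $G$ is at most $K-1$.

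Next I would observe that $Z$ is countable: any $D_1$-separated set in a geometrically doubling space is countable (for instance, $Z$ can be enumerated ball by ball $B(x_0, n)$, each of which contains finitely many points of $Z$ by the doubling bound applied as above). Enumerate $Z = \{z_1, z_2, \ldots\}$ and color greedily: assign to $z_n$ the smallest index in $\{1,\ldots,K\}$ not yet used by any neighbor $z_m$ with $m < n$. Since each $z_n$ has at most $K-1$ previously colored neighbors, such a color exists, and points sharing a color are never joined by an edge, i.e., they are at distance $\ge D_2$ from each other. Partitioning $Z$ by color classes then writes $Z$ as a disjoint union of at most $N := K$ $D_2$-separated sets, as required.

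The only non-routine point is verifying that greedy coloring still works when $Z$ is infinite, which is why I explicitly noted countability; beyond that, the argument reduces to a direct application of Lemma \ref{dms_properties}(2) with the right choice of $\delta$, and I expect no real obstacle.
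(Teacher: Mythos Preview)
Your argument is correct. Both your proof and the paper's rest on the same cardinality bound from Lemma~\ref{dms_properties}(2), but the way the partition is produced differs. The paper iteratively peels off maximal $D_2$-separated subsets $Z_1, Z_2, \ldots$ of $Z$ using Lemma~\ref{maximal_subsets}, and then argues by contradiction that the process terminates after $M_1$ rounds: any leftover point would witness $M_1+1$ points of $Z$ in a single $D_2$-ball. You instead set up the ``conflict graph'' on $Z$, bound its maximum degree by $K-1$, and greedily color along an enumeration of $Z$. Your route trades the appeal to Lemma~\ref{maximal_subsets} for a countability observation and a standard combinatorial fact, which makes the dependence $N = K = \lfloor M\,(D_1/(2D_2))^{-\log_2 M}\rfloor$ explicit; the paper's route stays entirely within the metric toolkit already assembled and does not need to enumerate $Z$. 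The two bounds on $N$ coincide, and neither argument is materially shorter than the other.
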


\begin{proof}
  First, notice that any ball of radius $D_2$ can contain at most boundedly many, say $M_1$, 
  points of $Z$ by the second part of Lemma \ref{dms_properties}. By Lemma \ref{maximal_subsets}, we can choose a maximal $D_2$-separated subset $Z_1$ from $Z$.
  By applying the same lemma $M_1$ times, we can choose maximal $D_2$-separated subsets $Z_k \subseteq Z \setminus \bigcup_{i=1}^{k-1} Z_i$ for every 
  $k = 1, 2, \ldots, M_1$. We claim that now $Z \setminus \bigcup_{k=1}^{M_1} Z_k = \emptyset$.
  
  For contradiction, suppose that there exists any point $x \in Z \setminus \bigcup_{k=1}^{M_1} Z_k$. By maximality, $B(x,D_2) \cap Z_k \neq \emptyset$ 
  for every $k = 1, 2, \ldots, M_1$ since otherwise the point $x$ would belong to one of the collections $Z_k$. Thus, the ball $B(x,D_2)$ contains $M_1 + 1$ 
  points of $Z$, which is a contradiction.
\end{proof}

In the construction of metric dyadic cubes we need maximal $\delta^k$ separated sets for every $k \in \Z$. For this we can use Lemma \ref{maximal_subsets} or the 
following stronger result:

\begin{theorem}[{\cite[Theorem 2.4]{hytonentapiola}}]
  For every $\delta \in (0,1/2)$ there exist \emph{maximal nested $\delta^k$-separated sets} $\A_k \coloneqq \{z_\alpha^k \colon \alpha \in \mathcal{N}_k\}$, $k \in \Z$:
  \begin{enumerate}
    \item[$\bullet$] $\A_k \subseteq \A_{k+1}$ for every $k \in \Z$;
    \item[$\bullet$] $d(z_\alpha^k, z_\beta^k) \ge \delta^k$ for $\alpha \neq \beta$;
    \item[$\bullet$] $\min_\alpha d(x,z_\alpha^k) < \delta^k$ for every $x \in X$ and every $k \in \Z$,
  \end{enumerate}
  where $\mathcal{N}_k = \{0,1,\ldots,n_k\}$ if the space $(X,d)$ is bounded, and $\mathcal{N}_k = \N$ otherwise.
\end{theorem}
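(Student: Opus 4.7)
\bigskip

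The plan is a two-sided induction starting from scale $k=0$. First, Lemma \ref{maximal_subsets} applied with parameter $\delta = 1$ supplies a countable maximal $1$-separated set $\A_0$, which is the base case. The sets $\A_k$ for $k > 0$ are then built by downward extension, while those for $k < 0$ are built by upward thinning, and it is the latter direction that uses the hypothesis $\delta < 1/2$.

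For the \emph{refinement direction} $k \to k+1$, suppose $\A_k$ has been constructed. Since $\delta^{k+1} < \delta^k$, the set $\A_k$ is automatically $\delta^{k+1}$-separated. I would then invoke Zorn's lemma (or an equivalent concrete greedy enumeration, which is possible because Lemma \ref{dms_properties} guarantees that $\delta^{k+1}$-separated sets are locally finite, so $\A_{k+1}$ is countable) to extend $\A_k$ to a maximal $\delta^{k+1}$-separated set $\A_{k+1} \supseteq \A_k$. The nested and separation properties are immediate; the net property $\min_\alpha d(x, z_\alpha^{k+1}) < \delta^{k+1}$ follows from maximality, because otherwise the point $x$ could be added to $\A_{k+1}$.

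For the \emph{coarsening direction} $k \to k-1$, the strategy is to choose a subset $\A_{k-1} \subseteq \A_k$ which is both $\delta^{k-1}$-separated and still a $\delta^{k-1}$-net in $X$. The most naive attempt---taking a maximal $\delta^{k-1}$-separated subset of $\A_k$---only shows by the triangle inequality that for every $x \in X$ there exists $z \in \A_{k-1}$ with $d(x,z) < \delta^k + \delta^{k-1}$, which exceeds $\delta^{k-1}$. The assumption $\delta < 1/2$ is precisely what closes this gap: since then $\delta^k < \tfrac{1}{2}\delta^{k-1}$, one can perform the greedy selection through $\A_k$ with a carefully tuned exclusion rule that exploits the geometry of $\A_k$ as a $\delta^k$-net (so that the extracted subset is sparse enough to be $\delta^{k-1}$-separated while still dense enough in $\A_k$ to absorb the $\delta^k$ triangle-inequality loss). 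Once one has checked that the resulting $\A_{k-1}$ satisfies both the separation and net properties in $X$, the induction step is complete, and countability again follows from Lemma \ref{dms_properties}.

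The main obstacle is the coarsening step: verifying the net property in $X$ for a subset of $\A_k$ cannot be done by a one-line greedy argument, and the condition $\delta < 1/2$ enters in an essential way. With a looser ratio between consecutive scales the triangle-inequality defect would be of the same order as $\delta^{k-1}$ itself and could not be absorbed by any choice of subset of $\A_k$; the sharpness of $\delta < 1/2$ thus reflects a genuine geometric constraint and is not merely a bookkeeping convenience.
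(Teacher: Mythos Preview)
The paper does not prove this theorem: it is quoted from \cite[Theorem~2.4]{hytonentapiola} and stated here without argument, so there is no proof in the present paper to compare your proposal against.

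On the merits of your sketch itself, the refinement step $k \to k+1$ is standard and fine. The coarsening step $k \to k-1$, however, contains a genuine gap. You correctly observe that the naive choice---a maximal $\delta^{k-1}$-separated subset of $\A_k$---only yields $d(x,\A_{k-1}) < \delta^k + \delta^{k-1}$, which is too weak. But your proposed remedy, a ``carefully tuned exclusion rule that exploits the geometry of $\A_k$ as a $\delta^k$-net,'' is not an argument: you never say what the rule is, nor why it simultaneously delivers $\delta^{k-1}$-separation and the strict $\delta^{k-1}$-net property in $X$. The assertion that $\delta < 1/2$ ``closes the gap'' is not substantiated; the triangle-inequality defect $\delta^k + \delta^{k-1} = \delta^{k-1}(1+\delta)$ exceeds $\delta^{k-1}$ for every $\delta > 0$, and merely having $\delta$ small does not by itself repair this within the greedy-subset framework you describe. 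If you intend to pursue the two-sided induction you must actually exhibit the selection procedure for the coarse direction and verify both properties; as written, that half of the construction is an assertion rather than a proof.
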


\subsection{Adjacent dyadic systems in metric spaces}

The following theorem is an improved version of the famous constructions of (quasi)metric dyadic cubes by M. Christ \cite{christ} and E. Sawyer and R. L. 
Wheeden \cite{sawyerwheeden}. This version was proved by Hytönen and A. Kairema \cite[Theorem 2.2]{hytonenkairema} and it has been adapted for 
different dyadic constructions in \cite{hytonentapiola} (see \cite[Theorem 2.9]{hytonentapiola}) and Theorem \ref{thm:adjacent_dyadic_systems} below.

\begin{theorem}
  \label{thm:dyadic_systems}
  Let $(X,d)$ be a doubling metric space and $\delta \in (0,1)$ be small enough. Then for given nested maximal sets of $\delta^k$-separated points 
  $\{z_\alpha^k \colon \alpha \in \A_k\}$, $k \in \Z$, there exist a countable collection of dyadic cubes $\D \coloneqq \{Q_\alpha^k \colon k \in \Z, \alpha \in \A_k\}$ 
  such that
  \begin{enumerate}
    \item[i)] $X = \bigcup_\alpha Q_\alpha^k \ \text{ for every } k \in \Z$;
    \item[ii)] $P,Q \in \D \Rightarrow P \cap Q \in \{\emptyset, P, Q \}$;
    \item[iii)] $B(z_\alpha^k, \frac{1}{5} \delta^k) \subseteq Q_\alpha^k \subseteq B(z_\alpha^k, 3\delta^k)$;
    \item[iv)] $Q_\alpha^k = \bigcup_{\beta: Q_\beta^{k+m} \subseteq Q_\alpha^k} Q_\beta^{k+m}$ for every $m \in \N$.
  \end{enumerate}
\end{theorem}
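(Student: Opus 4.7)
The plan is to follow the Christ-type construction as refined by Hytönen and Kairema in \cite{hytonenkairema}: organize the centers into a tree via a parent map, build preliminary ``inner'' and ``outer'' precubes from the given balls, and interpolate between the two to obtain the actual cubes.

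First I set up the tree structure. For each $k \in \Z$ and $\alpha \in \A_k$, I define a parent index $\pi_k(\alpha) \in \A_{k-1}$ by $\pi_k(\alpha) = \alpha$ whenever $\alpha \in \A_{k-1}$ (permitted by the nestedness $\A_{k-1} \subseteq \A_k$) and otherwise letting $\pi_k(\alpha)$ be some $\beta \in \A_{k-1}$ minimizing $d(z_\alpha^k, z_\beta^{k-1})$, with an arbitrary fixed tie-breaker. The maximality half of the preceding theorem gives $d(z_\alpha^k, z_{\pi_k(\alpha)}^{k-1}) < \delta^{k-1}$. Iterating and summing a geometric series, any descendant $z_\gamma^{k+m}$ of $z_\alpha^k$ then satisfies
\[
  d(z_\gamma^{k+m}, z_\alpha^k) < \delta^k + \delta^{k+1} + \cdots + \delta^{k+m-1} < \frac{\delta^k}{1-\delta}.
\]

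Next I introduce the precubes. Let $\Qt_\alpha^k$ be the union of $B(z_\gamma^{k+m}, \tfrac{1}{5}\delta^{k+m})$ over all descendants $z_\gamma^{k+m}$ of $z_\alpha^k$, and $\Qb_\alpha^k$ the analogous union with closed balls of radius $C_0 \delta^{k+m}$ for a fixed $C_0 \ge 1$. The $\tfrac{1}{5}\delta^k$-balls are pairwise disjoint in $\alpha$ at each level by $\delta^k$-separation, and for $\delta$ small this propagates to pairwise disjointness of the $\Qt_\alpha^k$ at each fixed level; meanwhile, $C_0 \ge 1$ together with maximality of $\A_k$ forces the $\Qb_\alpha^k$ to cover $X$. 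To define the actual cubes $Q_\alpha^k$ I would process the levels from coarsest to finest: starting from $\Qt_\alpha^k$, I adjoin pieces of $\Qb_\alpha^k \setminus \Qt_\alpha^k$ via a fixed tie-breaking rule, subject to the constraint that the result be compatible with the parent map (any point assigned to a child cube $Q_\beta^{k+1}$ with $\pi_{k+1}(\beta) = \alpha$ must land in $Q_\alpha^k$). Properties (i), (ii), (iv) then follow from the tree-compatible construction, and (iii) reduces to the precube bounds.

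The main obstacle is the quantitative calibration behind (iii): I must fix $\delta$ small enough (and $C_0$ accordingly) so that $\delta^k/(1-\delta) + C_0 \delta^{k+m} \le 3\delta^k$ for every $m \ge 0$, which forces $\Qb_\alpha^k \subseteq B(z_\alpha^k, 3\delta^k)$ and hence the outer inclusion of (iii), while simultaneously keeping $C_0 \ge 1$ and the inner precubes pairwise disjoint. The inner containment $B(z_\alpha^k, \tfrac{1}{5}\delta^k) \subseteq Q_\alpha^k$ then follows because $B(z_\alpha^k, \tfrac{1}{5}\delta^k) \subseteq \Qt_\alpha^k \subseteq Q_\alpha^k$ by construction. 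This balancing of constants is exactly what the hypothesis ``$\delta$ small enough'' encodes.
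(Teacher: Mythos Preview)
The paper does not actually prove this theorem: it is quoted from Hyt\"onen--Kairema \cite[Theorem 2.2]{hytonenkairema} (and its adaptation \cite[Theorem 2.9]{hytonentapiola}) and used as a black box. Your plan is precisely a sketch of that cited construction, so in that sense you are aligned with what the paper invokes.

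One genuine issue in your sketch deserves attention. You propose to ``process the levels from coarsest to finest,'' but the index set is $\Z$, so there is no coarsest level to start from; an inductive top-down definition of the $Q_\alpha^k$ does not get off the ground. In the Hyt\"onen--Kairema argument this is handled differently: one extends the parent relation to a partial order $\le$ on all labels $(k,\alpha)$ and defines $Q_\alpha^k$ in one stroke as the set of points $x$ for which the label selected at each finer level (via the tie-breaking rule on the closed $\Qb$-precubes) is eventually $\le (k,\alpha)$. This simultaneous definition across all $k\in\Z$ is what makes the nesting (ii) and the refinement (iv) automatic, and it replaces the level-by-level recursion you describe. Apart from this, your identification of the key ingredients --- the parent map with $d(z_\alpha^k, z_{\pi_k(\alpha)}^{k-1}) < \delta^{k-1}$, the geometric-series bound on descendant distances, the sandwich $\Qt_\alpha^k \subseteq Q_\alpha^k \subseteq \Qb_\alpha^k$, and the calibration of $\delta$ and $C_0$ forcing (iii) --- is correct.
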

For every dyadic system $\D$ and cube $Q \coloneqq Q_\alpha^j \in \D$ we use the following notation:
\begin{align*}
  \text{lev}(Q) &\coloneqq j, & &\text{ (level/generation of the cube $Q$)} \\
  \D^k &\coloneqq \{Q_\alpha^k \in \D \colon \alpha \in \A_k\}, & &\text{ (cubes of level $k$)} \\
  B_Q  &\coloneqq B(z_\alpha^j, 3 \delta^j), & &\text{ (ball containing cube $Q$)} \\
  x_Q &\coloneqq z_\alpha^j, & &\text{ (the center point of the cube $Q$)}.
\end{align*}

Like we mentioned earlier, the central idea of our techniques in Section \ref{section:shift_operators} is to split a given dyadic system into suitable 
subcollections that help us approximate certain given indicators by their conditional expectations. For this we use 
\emph{adjacent dyadic systems} which have turned out to be a convenient tool for approximating arbitrary balls and other 
objects by cubes both in $\R^n$ and more abstract settings (see e.g. \cite{kairema, lisun}). In quasimetric spaces they were first constructed by 
Hytönen and Kairema \cite[Theorem 4.1]{hytonenkairema} (based on the ideas of Hytönen and H. 
Martikainen \cite{hytonenmartikainen}) but by restricting ourselves to a strictly metric setting we can use systems with more powerful properties. 
The following theorem was proved recently by Hytönen and the author for $n = 1$:

\begin{theorem}
  \label{thm:adjacent_dyadic_systems}
  Let $(X,d)$ be a doubling metric space with a doubling constant $M$ and let $n \in \N$ be fixed. Then for $\delta < 1 / (n \cdot 168M^8)$ there exist a bounded number of 
  \emph{adjacent} dyadic systems $\D(\omega)$, $\omega = 1,2,\ldots,K = K(\delta)$, such that
  \begin{enumerate}
    \item[I)] each $\D(\omega)$ is a dyadic system in the sense of Theorem \ref{thm:dyadic_systems};
    \item[II)] for a fixed $p \in \N$ and fixed balls $B_1, B_2, \ldots, B_n$ 
               there exist $\omega \in \{1,2,\ldots,K\}$ and cubes $Q_{B_1}, Q_{B_2}, \ldots, Q_{B_n} \in \D(\omega)$ such that for every $i \in \{1,2,\ldots,n\}$ we have
               \begin{enumerate}
                 \item[i)] $B_i \subseteq Q_{B_i}$;
                 \item[ii)] $\ell(Q_{B_i}) \le \delta^{-2} r(B_i)$;
                 \item[iii)] $\delta^{-p} B_i \subseteq Q_{B_i}^{(p)}$,
               \end{enumerate}
               where $\ell(Q) = \delta^k$ if $Q = Q_\alpha^k$, $r(B)$ is the radius of the ball $B$ and $Q_{B_i}^{(p)}$ is the unique dyadic ancestor of $Q_{B_i}$ of generation $\text{\emph{lev}}(Q_{B_i})-p$.
  \end{enumerate}
\end{theorem}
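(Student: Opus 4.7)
The plan is to adapt the construction of adjacent dyadic systems from \cite[Theorem 2.9]{hytonentapiola}, where the $n=1$ case is handled, and combine it with a pigeonhole/union bound over the parameter $\omega$ to treat $n$ balls simultaneously. The extra factor $n$ in the hypothesis $\delta < 1/(n \cdot 168 M^8)$ -- which is the $n=1$ threshold $1/(168 M^8)$ shrunk by a factor of $n$ -- will be used precisely to force the proportion of ``bad'' systems for any single ball to stay strictly below $1/n$, so that after a union bound over $n$ balls some common system survives.

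Concretely, I would first construct $K = K(\delta)$ adjacent systems $\D(\omega)$ exactly as in \cite{hytonentapiola}: each is a dyadic system from Theorem \ref{thm:dyadic_systems} built on a collection of nested maximal $\delta^k$-separated sets, parameterized by $\omega \in \{1, \ldots, K\}$ in such a way that, at every level $k$, the union $\bigcup_{\omega} \A_k(\omega)$ of the level-$k$ center sets is much denser than any individual $\delta^k$-separated $\A_k(\omega)$. The tightened bound on $\delta$ guarantees, at every level, that the ``safe placement region'' for the center of $Q_{B_i}$ -- the region of centers from which i), ii), and iii) can all be forced via the nested ancestor structure of Theorem \ref{thm:dyadic_systems} -- is geometrically large compared to the $\delta^{k}$-separation of any single $\A_k(\omega)$.

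Next, fix $p \in \N$ and the balls $B_1, \ldots, B_n$. For each $i$, choose the unique $k_i \in \Z$ with $\delta^{-1} r(B_i) < \delta^{k_i} \le \delta^{-2} r(B_i)$, so that the candidate cubes have the right side length for i) and ii). Define
\[
  G_i \coloneqq \bigl\{ \omega \in \{1, \ldots, K\} : \exists Q \in \D(\omega)^{k_i} \text{ satisfying i), ii), iii) for } B_i \bigr\}.
\]
The heart of the proof is the quantitative bound
\[
  |G_i^c| < \tfrac{K}{n} \qquad \text{for each } i = 1, \ldots, n.
\]
This is established by a direct geometric counting argument: a system $\D(\omega)$ fails to be in $G_i$ precisely when none of its level-$k_i$ centers falls inside a certain safe ball around $x_{B_i}$ (those centers from which both $B_i \subseteq Q_\alpha^{k_i}$ and $\delta^{-p} B_i \subseteq (Q_\alpha^{k_i})^{(p)}$ hold). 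Comparing the radius of this safe ball with the $\delta^{k_i}$-separation of the level-$k_i$ centers, via iterated applications of Lemma \ref{dms_properties} (once to handle i)--ii) at level $k_i$, and again to handle iii) at the ancestor level $k_i - p$), yields exactly the factor $168 M^8$, while the additional factor $n$ provides the margin that drives the fraction of bad $\omega$'s below $1/n$. Once this bound is in hand, the union bound gives
\[
  \biggl|\bigcap_{i=1}^n G_i \biggr| \ge K - \sum_{i=1}^n |G_i^c| > K - n \cdot \tfrac{K}{n} = 0,
\]
so any $\omega \in \bigcap_i G_i$ yields a single adjacent system $\D(\omega)$ and cubes $Q_{B_i} \in \D(\omega)^{k_i}$ satisfying i), ii), iii) for every $i$ at once.

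The main obstacle is the quantitative claim $|G_i^c| < K/n$. This requires opening up the construction of the systems $\D(\omega)$ and understanding how the $\omega$-parameter distributes center points across \emph{all} levels in a way compatible with the nested ancestor relations; the subtlety is that condition iii) couples the choice of center at level $k_i$ with the choice of center at level $k_i - p$ (via the ancestor map), and one has to verify that the combined ``bad-$\omega$'' set for both constraints is still a fraction $< 1/n$. Keeping track of the constants $168 M^8$ while simultaneously extracting the factor $n$ from the refined $\delta$-threshold is where the bulk of the technical bookkeeping lies.
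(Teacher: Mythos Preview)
Your approach is essentially the paper's: take the $n=1$ result from \cite{hytonentapiola} and upgrade to $n$ balls by a union bound, with the extra factor $n$ in the hypothesis $\delta < 1/(n\cdot 168M^8)$ providing exactly the slack needed. The only difference is that the paper's argument is shorter than you anticipate: rather than ``opening up the construction'' and recounting centers in safe balls, it simply quotes the boundary-region probability estimate already established in \cite[Theorem~5.9]{hytonentapiola},
\[
  \Pro_\omega\Bigl(x \in \bigcup_\alpha \partial_{\delta^{k-p+1}} Q_\alpha^{k-p}(\omega) \cup \bigcup_\alpha \partial_{\delta^{k+1}} Q_\alpha^k(\omega)\Bigr) \le 168M^8\delta,
\]
applies it to each center $x_i$, and sums: the bad set has probability at most $n\cdot 168M^8\delta < 1$, so some $\omega$ is good for all $i$ simultaneously. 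Your counting inequality $|G_i^c| < K/n$ is exactly this probability bound rephrased, so no additional geometric work at levels $k_i$ and $k_i-p$ is required beyond what the $n=1$ case already provides.
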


\begin{proof}
  In \cite[Theorem 5.9]{hytonentapiola} the case $n = 1$ was proved by showing that if $B(x,r)$ is a ball such that $\delta^{k+2} < r \le \delta^{k+1}$, then 
  \begin{eqnarray}
    \Pro_\omega \left( \left\{ \omega \in \Omega \colon x \in \left( \bigcup_\alpha \partial_{\delta^{k-p+1}} Q_\alpha^{k-p}(\omega) \cup \bigcup_\alpha \partial_{\delta^{k+1}} Q_\alpha^k(\omega) \right) \right\} \right) \le 168M^8 \delta < 1 \label{proof:adjacent_probability}
  \end{eqnarray}
  where $\Pro_\omega$ is the natural probability measure of the finite set $\Omega \coloneqq \{0,1,\ldots, \lfloor 1 / \delta \rfloor \}$, $Q(\omega)$ is a cube of the 
  dyadic system $\D(\omega)$ and
  \begin{eqnarray*}
    \epsb A \coloneqq \{x \in A \colon d(x,A^c) < \varepsilon \} \cup \{x \in A^c \colon d(x,A) < \varepsilon\}.
  \end{eqnarray*}
  Given \eqref{proof:adjacent_probability}, the proof for general $n \in \N$ is simple. Let $B_1, B_2, \ldots, B_n$ be balls and denote $B_i \coloneqq B(x_i,r_i)$,
  $\delta^{k_i+2} < r_i \le \delta^{k_i+1}$. Then
  \begin{eqnarray*}
   \ \Pro_\omega \left( \left\{ \omega \in \Omega \colon x_i \in \left( \bigcup_\alpha \partial_{\delta^{k_i-p+1}} Q_\alpha^{k_i-p}(\omega) \cup \bigcup_\alpha \partial_{\delta^{k_i+1}} Q_\alpha^{k_i}(\omega) \right) \text{ for some } i \right\} \right) \le n \cdot 168M^8 \delta < 1.
  \end{eqnarray*}
  Thus, there exists $\omega \in \Omega$ such that $x_i \notin \left( \bigcup_\alpha \partial_{\delta^{k_i-p+1}} Q_\alpha^{k_i-p}(\omega) \cup \bigcup_\alpha \partial_{\delta^{k_i+1}} Q_\alpha^{k_i}(\omega) \right)$ for
  every $i = 1,2,\ldots,n$, which is enough to prove the claim.
\end{proof}

\begin{remark}
  \label{remark_adjacent}
  \begin{enumerate}
    \item[1)] In the previous theorem, the constant $K$ is roughly $1/\delta$ \cite[Section 5.2]{hytonentapiola}. Thus, for a large $n$ both the number of 
              systems $\D(\omega)$ and the change of length scale between two consecutive levels of cubes become large.
              
    \item[2)] We will use the previous theorem only for $n = 2$ in the following way. Let $Q_1, Q_2 \in \D^k$ and $m > 1$ be fixed. Then by Theorem \ref{thm:adjacent_dyadic_systems}
              there exists an index $\omega$ and cubes $P_1, P_2 \in \D(\omega)^{k-3}$ such that
              \begin{eqnarray*}
                Q_1 \subseteq B_{Q_1} \subseteq P_1, \ \ \ \ \ Q_2 \subseteq B_{Q_2} \subseteq P_2, \ \ \ \ \ 2mB_{Q_1} \subseteq P_1^{(p_m)}
              \end{eqnarray*}
              for $p_m \in \N$ such that $2m \delta^{p_m} \le 1$.
  \end{enumerate}
\end{remark}

\subsection{Conditional expectations}

Conditional expectations are mostly used in the field of probability theory but they have turned out to be extremely useful also with many questions 
related to more classical analysis (see e.g. \cite{hytonensharpbound}). It is well known among specialists that most of the results related to conditional 
expectations remain true in more general measure spaces but, unfortunately, it is difficult to find a comprehensive presentation of this extended theory in 
the literature. We refer to \cite{tanakaterasawa} for some basic properties of conditional expectations in $\sigma$-finite measure spaces
and \cite[Chapter 9]{williams} for a presentation of the classical probabilistic theory of conditional expectations.

Let $(X,\F,\mu,d)$ be a metric measure space such that $\mu$ is a doubling Borel measure, i.e. there exists a constant $D \coloneqq D_\mu$ such that
\begin{eqnarray*}
  \mu(2B) \le D \mu(B) < \infty
\end{eqnarray*}
for every ball $B$. By construction we know that if $\D$ is a dyadic system given 
by Theorem \ref{thm:dyadic_systems}, then $\D \subseteq \text{Bor} \, X$. In particular, the $\sigma$-algebra generated by 
any subcollection of $\D$ is a subset of $\F$. 

Let us denote $\G_0 \coloneqq \{G \in \G \colon \mu(G) < \infty\}$ for every $\sigma$-algebra $\G \subseteq \F$, and 
let $L^1_\sigma(\G)$ be the space of functions that are integrable over all $G \in \G_0$.

\begin{defin}
  Let $\G$ be $\sigma$-finite sub-$\sigma$-algebra of $\F$ and let $f \colon X \to E$ be a $\F$-measurable function where $E$ is a 
  Banach space. Then a $\G$-measurable function $g$ is a \emph{conditional expectation of $f$ with respect to $\G$} if
  \begin{eqnarray*}
    \int_G f \, d\mu = \int_G \, d\mu
  \end{eqnarray*}
  for every $G \in \G_0$.
\end{defin}
It is not difficult to prove that if the conditional expectation exists, it is unique a.e. Thus, we denote $\E[f | \G] \coloneqq g$ if $g$ is a conditional 
expectation of $f$ with respect to $\G$. Concerning existence, we only need the following elementary case in this paper.

\begin{lemma}
  \label{lemma:conditional_expectation}
  Let $\mathcal{A} \coloneqq \{A_i \colon i \in \N \} \subseteq \F$ be a countable partition of the space $X$ such that $\mu(A_i) < \infty$ for every $i \in \N$ and let $\A$ be the 
  $\sigma$-algebra generated by $\mathcal{A}$. Then for every $f \in L^1_\sigma(\F)$ we have
  \begin{eqnarray*}
    \E[f | \A] = \sum_{A \in \mathcal{A}} 1_A \langle f \rangle_A.
  \end{eqnarray*}
\end{lemma}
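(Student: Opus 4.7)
The plan is to verify directly that the function $g \coloneqq \sum_{A \in \mathcal{A}} 1_A \langle f \rangle_A$ satisfies the two defining requirements of a conditional expectation: $\A$-measurability and the averaging identity on every set of $\A_0$. Since the claim is really just the "elementary partition" case that the reader expects to exist in a $\sigma$-finite framework, the heart of the matter is bookkeeping with a countable sum.

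First I would check that $g$ is well-defined and $\A$-measurable. On each atom $A_i$ the averages $\langle f \rangle_{A_i} = \frac{1}{\mu(A_i)} \int_{A_i} f \ud \mu$ make sense because $\mu(A_i) < \infty$ and $f \in L^1_\sigma(\F)$, so the Bochner integral exists. The function $g$ is constant on each $A_i$, hence measurable with respect to $\A$, which is precisely the $\sigma$-algebra of all countable unions of atoms $A_i$.

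Next I would verify the integral identity. Every $G \in \A$ can be written uniquely as $G = \bigcup_{i \in I_G} A_i$ for some $I_G \subseteq \N$. For each single atom $A_i$, the identity
\begin{equation*}
  \int_{A_i} g \ud \mu = \mu(A_i) \langle f \rangle_{A_i} = \int_{A_i} f \ud \mu
\end{equation*}
is immediate from the definition of $g$. To pass from single atoms to an arbitrary $G \in \A_0$, I would fix $G$ with $\mu(G) < \infty$, note that $f \in L^1_\sigma(\F)$ gives $\int_G \|f\| \ud \mu < \infty$, and use dominated convergence on the partial sums $\sum_{i \in I_G, i \le N} 1_{A_i} f$ with majorant $\|f\| 1_G$ (and a parallel bound for $g$) to obtain
\begin{equation*}
  \int_G f \ud \mu = \sum_{i \in I_G} \int_{A_i} f \ud \mu = \sum_{i \in I_G} \int_{A_i} g \ud \mu = \int_G g \ud \mu.
\end{equation*}

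The only real subtlety, and the one place that slightly goes beyond "routine", is that $f$ is $E$-valued so one must invoke Bochner-integral versions of dominated convergence and $\sigma$-additivity of the integral; I would flag this explicitly but not dwell on it, since these results are standard for strongly measurable integrable functions into a Banach space. Uniqueness of the conditional expectation, noted in the paragraph preceding the lemma, then lets one conclude $\E[f|\A] = g$ a.e.
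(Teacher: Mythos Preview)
Your proof is correct and follows essentially the same route as the paper: write an arbitrary $G \in \A_0$ as a disjoint union of atoms $A_i$, verify $\int_{A_i} f = \int_{A_i} g$ on each atom, and sum. The paper compresses this into a single display and omits the measurability check and the explicit appeal to dominated convergence that you spell out, but the argument is the same.
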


\begin{proof}
  Let $G \in \A_0$. Then there exist pairwise disjoint sets $A_1^G, A_2^G, \ldots \in \mathcal{A}$ such that $G = \bigcup_i A_i^G$.
  Now
  \begin{eqnarray*}
    \int_G f \, d\mu = \sum_i \int_{A_i^G} \left( \fint_{A_i^G} f \, d\mu \right) d\mu = \int_G \sum_i 1_{A_i^G} \left( \fint_{A_i^G} f \, d\mu \right) d\mu = \int_G \left( \sum_{A \in \mathcal{A}} 1_A \fint_A f \, d\mu \right) d\mu
  \end{eqnarray*}
  which proves the claim.
\end{proof}

\subsection{UMD spaces; type and cotype of Banach spaces}

Let $(X,d,\F,\mu)$ be a metric measure space and let $(\F_k)$, $k = 0,1,\ldots,N$, be a sequence of sub-$\sigma$-algebras of $\F$ such that $\F_k \subseteq \F_{k+1}$ for all $k$. For simplicity, let us denote
\begin{eqnarray*}
  \|\cdot\|_p &\coloneqq& \|\cdot\|_{L^p(X;E)}
\end{eqnarray*}
where $\| \cdot \|_{L^p(X;E)}$ is the $L^p$-Bochner norm.

\begin{defin}
  A sequence of functions $(d_k)_{k=1}^N$ is a \emph{martingale difference sequence} if $d_k$ is $\F_k$-measurable and $\E[d_k | \F_{k-1}] = 0$ for every $k$.
\end{defin}

\begin{defin}
  A Banach space $(E,\|\cdot\|_E)$ is a \emph{UMD} (\emph{unconditional martingale difference}) \emph{space} if for every $p \in (1,\infty)$ there exists a constant $\beta_p$ such that
  \begin{eqnarray*}
    \left\| \sum_{i=1}^N \varepsilon_i d_i \right\|_p \le \beta_p \left\| \sum_{i=1}^N d_i \right\|_p
  \end{eqnarray*}
  for all $E$-valued $L^p$-martingale difference sequences $(d_i)_{i=1}^N$ (i.e. $(d_i)$ is a martingale difference sequence such that $d_i \in L^p(X,\F_i;E)$ for every $i$) 
  and for all choices of signs $(\varepsilon_i)_{i=1}^N \in \{-1,+1\}^N$.
\end{defin}
UMD spaces are crucial in Banach space valued harmonic analysis due to their many good properties; for example, 
a Banach space $E$ is a UMD space if and only if the Hilbert transform is bounded on $L^p(\R;E)$ \cite{burkholder, bourgain}. They give us a 
natural setting for analysis that is based on techniques used in probability spaces in the following way.
Let $(d_i)$ be a martingale difference sequence and let $(\varepsilon_i)$ be a sequence of \emph{random signs}, i.e. independent random variables on some probability space $(\Omega,\Pro)$, 
with distribution $\Pro\left( \varepsilon_i = -1 \right) = \Pro\left( \varepsilon_i = +1 \right) = 1/2$. Then for every $\eta \in \Omega$ the sequence 
$(\varepsilon_i(\eta) d_i)$ is a martingale difference sequence. In particular, the UMD property gives us
\begin{eqnarray}
  \label{inequality_random_signs} \left\| \sum_{i=1}^N d_i \right\|_p \eqsim_E \left( \int_\Omega \left\| \sum_{i=1}^N \varepsilon_i(\eta) d_i \right\|^p_p \, d\Pro(\eta) \right)^{1/p} \eqqcolon \left\| \sum_{i=1}^N \varepsilon_i d_i \right\|_{\Omega,p}.
\end{eqnarray}
for every $p \in (1,\infty)$.

The following inequality by J. Bourgain is a standard tool in UMD valued analysis. Its original scalar-valued version was due to E. Stein.
\begin{theorem}[See e.g. {\cite[Proposition 3.8]{clementetall}}]
  \label{inequality:stein}
  Let $(f_k)$ be a sequence of functions in $L^p(X,\F;E)$ and $(\F_k)$ a sequence of $\sigma$-finite $\sigma$-algebras such that $\F_k \subseteq \F_{k+1} \subseteq \F$
  for every $k \in \N$. Then for any sequence of random signs $(\varepsilon_k)$ we have
  \begin{eqnarray*}
    \left\| \sum_k \varepsilon_k \E[f_k | \F_k] \right\|_{\Omega,p} \lesssim_{p,\beta_p} \left\| \sum_k \varepsilon_k f_k \right\|_{\Omega,p}.
  \end{eqnarray*}
\end{theorem}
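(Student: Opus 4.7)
The plan is to realise both sides of the inequality—the sum $\sum_k \eps_k \E[f_k | \F_k]$ to be estimated and the controlling sum $\sum_k \eps_k f_k$—as partial sums of one and the same $E$-valued martingale on a suitably enlarged filtration, and then to extract the bound directly from the UMD property.

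First, I would pass to the product space $(\Omega \times X, \Pro \otimes \mu)$ and construct a \emph{staircase} filtration $(\mathcal{P}_j)_{j \ge 0}$ that interleaves the information coming from $(\F_k)$ and from the signs $(\eps_k)$. Concretely, for each $k \in \N$ I set
\begin{align*}
  \mathcal{P}_{3k-2} &\coloneqq \sigma(\eps_1,\dots,\eps_{k-1}) \otimes \F_k, \\
  \mathcal{P}_{3k-1} &\coloneqq \sigma(\eps_1,\dots,\eps_k) \otimes \F_k, \\
  \mathcal{P}_{3k}   &\coloneqq \sigma(\eps_1,\dots,\eps_k) \otimes \F,
\end{align*}
together with the $E$-valued increments $\Delta_{3k-2} \coloneqq 0$, $\Delta_{3k-1} \coloneqq \eps_k \E[f_k | \F_k]$ and $\Delta_{3k} \coloneqq \eps_k (f_k - \E[f_k | \F_k])$. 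The two non-trivial martingale-difference identities $\E[\Delta_{3k-1} | \mathcal{P}_{3k-2}] = \E[\eps_k]\,\E[f_k | \F_k] = 0$ (by independence of $\eps_k$ from $\mathcal{P}_{3k-2}$) and $\E[\Delta_{3k} | \mathcal{P}_{3k-1}] = \eps_k\bigl(\E[f_k | \F_k] - \E[f_k | \F_k]\bigr) = 0$ (by pulling $\eps_k$ out, since it is $\mathcal{P}_{3k-1}$-measurable) then make $(\Delta_j)$ an $L^p$-martingale difference sequence on $(\mathcal{P}_j)$ whose total sum is $\sum_j \Delta_j = \sum_k \eps_k f_k$.

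Second, I would read the claim off from the UMD property. By the classical equivalence between the $\pm 1$ formulation of UMD used in the paper and its predictable-multiplier form, for every predictable scalar sequence $(\alpha_j)$ with $|\alpha_j| \le 1$ we have $\|\sum_j \alpha_j \Delta_j\|_{\Omega,p} \lesssim_{p,\beta_p} \|\sum_j \Delta_j\|_{\Omega,p}$. The deterministic choice $\alpha_{3k-1} = 1$ and $\alpha_{3k-2} = \alpha_{3k} = 0$ collapses the transformed sum to precisely $\sum_k \eps_k \E[f_k | \F_k]$, so the inequality becomes exactly the one asserted.

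The main obstacle is the conceptual step of finding a single filtration in which \emph{both} sums appearing in the inequality arise as martingale partial sums; the staircase above is tailored so that the ``adapted part'' $\eps_k \E[f_k | \F_k]$ and the ``residual'' $\eps_k(f_k - \E[f_k | \F_k])$ become martingale differences at separate, consecutive levels of a common filtration, which is what allows a single application of the predictable-multiplier transform to isolate the former. The remaining points—$\sigma$-finiteness of the filtration (automatic from the $\sigma$-finiteness of $(\F_k)$ and the finiteness of $\Pro$) and the equivalence between the two formulations of UMD—are routine and contribute only harmless constants to the final bound.
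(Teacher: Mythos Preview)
The paper does not give its own proof of this inequality---it is quoted from the literature with a reference to Cl\'ement et al.---so there is nothing to compare against on that front. Your overall strategy (pass to the product space, build an interleaved filtration, and read off the bound from a single application of the UMD martingale-transform inequality) is the standard one and is correct in spirit. However, there is a genuine gap in the execution.

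Your staircase $(\mathcal{P}_j)$ is \emph{not} a filtration: you set $\mathcal{P}_{3k} = \sigma(\eps_1,\dots,\eps_k) \otimes \F$ and $\mathcal{P}_{3(k+1)-2} = \sigma(\eps_1,\dots,\eps_k) \otimes \F_{k+1}$, but since $\F_{k+1} \subseteq \F$ (and typically $\F_{k+1} \neq \F$) the inclusion goes the wrong way, $\mathcal{P}_{3k} \supsetneq \mathcal{P}_{3k+1}$. Once you have revealed all of $\F$ at step $3k$ in order to make $\Delta_{3k} = \eps_k(f_k - \E[f_k|\F_k])$ measurable, you cannot drop back down to $\F_{k+1}$ at the next step. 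Consequently $(\Delta_j)$ is not a martingale difference sequence and the UMD inequality cannot be invoked.

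The fix is to abandon the attempt to make $\sum_j \Delta_j$ equal $\sum_k \eps_k f_k$ on the nose, and instead use a two-step staircase
\[
  \mathcal{P}_{2k-1} \coloneqq \sigma(\eps_1,\dots,\eps_{k-1}) \otimes \F_k,
  \qquad
  \mathcal{P}_{2k} \coloneqq \sigma(\eps_1,\dots,\eps_k) \otimes \F_k,
\]
which \emph{is} increasing. Form the martingale differences $D_j$ of $F \coloneqq \sum_{k=1}^N \eps_k f_k$ with respect to $(\mathcal{P}_j)_{j=0}^{2N}$; a short computation gives $D_{2k} = \eps_k \E[f_k|\F_k]$ exactly, while the odd-indexed $D_{2k-1}$ collect the cross terms. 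The UMD transform with $\alpha_{2k}=1$, $\alpha_{2k-1}=0$ (or the averaging trick with $\alpha_{2k-1}=\pm 1$) then yields
\[
  \Bigl\|\sum_k \eps_k \E[f_k|\F_k]\Bigr\|_{\Omega,p}
  \lesssim_{\beta_p} \Bigl\|\sum_j D_j\Bigr\|_{\Omega,p}
  = \bigl\|\E[F \mid \mathcal{P}_{2N}]\bigr\|_{\Omega,p}
  \le \|F\|_{\Omega,p},
\]
the last step by contractivity of conditional expectation. This is essentially the argument in the cited reference.
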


In our proofs we also need the following version of the well-known principle of contraction by J.-P. Kahane. It holds in all Banach spaces.
\begin{theorem}[ {\cite[Theorem 5 (Section 2.6)]{kahane} }]
  \label{kahane_contraction_principle}
  Suppose that $(\varepsilon_i)$ is a sequence of random signs and the series $\sum_i \varepsilon_i x_i$ converges in $E$ almost surely.
  Then for any bounded sequence of scalars $(c_i)$ the series $\sum_i \varepsilon_i c_i x_i$ converges in $E$ almost surely and
  \begin{align*}
    \int_\Omega \left\| \sum_i \varepsilon_i c_i x_i \right\|_E^p d\Pro \le \left(\sup_i |c_i| \right)^p \int_\Omega \left\| \sum_i \varepsilon_i x_i \right\|_E^p d\Pro.
  \end{align*}
\end{theorem}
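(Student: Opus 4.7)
The plan is to first prove the claim for finite sums by exploiting the convexity of the functional
\[
\psi(c) \coloneqq \int_\Omega \Bigl\| \sum_{i=1}^N \varepsilon_i c_i x_i \Bigr\|_E^p \, d\Pro
\]
in the scalar vector $c = (c_i)$, and then pass to infinite sums via a Cauchy argument. For each fixed $\omega \in \Omega$, the function $c \mapsto \bigl\|\sum_i \varepsilon_i(\omega) c_i x_i\bigr\|_E^p$ is the composition of a linear map with $\|\cdot\|_E$ and with the nondecreasing convex function $t \mapsto t^p$ on $[0,\infty)$, hence convex on $\R^N$ for $p \ge 1$; integration over $\Omega$ preserves convexity, so $\psi$ is convex. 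Setting $M \coloneqq \sup_i |c_i|$, the vector $c/M$ lies in the cube $[-1,1]^N$, which is the convex hull of its $2^N$ vertices $\{-1,+1\}^N$. Hence
\[
c/M = \sum_{\sigma \in \{-1,+1\}^N} \lambda_\sigma \, \sigma, \qquad \lambda_\sigma \ge 0, \ \sum_\sigma \lambda_\sigma = 1,
\]
and combining the convexity of $\psi$ with the homogeneity $\psi(Mc') = M^p \psi(c')$ yields $\psi(c) \le M^p \sum_\sigma \lambda_\sigma \psi(\sigma)$.

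The next step is a symmetry argument to evaluate $\psi(\sigma)$. Since the $\varepsilon_i$ are independent and symmetric, the random vector $(\sigma_i \varepsilon_i)_i$ has the same distribution as $(\varepsilon_i)_i$ for every fixed $\sigma \in \{-1,+1\}^N$. Therefore
\[
\psi(\sigma) = \int_\Omega \Bigl\| \sum_i \varepsilon_i \sigma_i x_i \Bigr\|_E^p \, d\Pro = \int_\Omega \Bigl\| \sum_i \varepsilon_i x_i \Bigr\|_E^p \, d\Pro
\]
is independent of $\sigma$, and summing with weights $\lambda_\sigma$ gives the finite-sum contraction inequality.

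To pass to infinite sums, I would apply the finite inequality to the Cauchy blocks, obtaining
\[
\int_\Omega \Bigl\| \sum_{i=j}^{k} \varepsilon_i c_i x_i \Bigr\|_E^p \, d\Pro \le M^p \int_\Omega \Bigl\| \sum_{i=j}^{k} \varepsilon_i x_i \Bigr\|_E^p \, d\Pro,
\]
and the right-hand side tends to $0$ as $j,k \to \infty$, since the a.s.-convergent series $\sum_i \varepsilon_i x_i$ is also $L^p$-convergent by the Itô--Nisio theorem together with Kahane's inequality. Hence the partial sums of $\sum_i \varepsilon_i c_i x_i$ are Cauchy in $L^p(\Omega;E)$, which gives convergence in probability; Lévy's theorem for sums of independent symmetric random variables then upgrades this to almost sure convergence, and Fatou's lemma transfers the inequality to the infinite series. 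The main obstacle I anticipate is not in the convexity-and-symmetry core, which is completely general and valid in any Banach space, but in this final passage from finite to infinite sums --- in particular, carefully justifying almost sure convergence of $\sum_i \varepsilon_i c_i x_i$ without losing Banach-space generality. (A minor caveat: for $0 < p < 1$ the map $t \mapsto t^p$ is not convex, so one must first invoke Kahane's inequality to reduce to $p = 1$ before applying the convexity argument above.)
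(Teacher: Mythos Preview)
The paper does not prove this theorem; it is quoted from Kahane's book as a known tool and used as a black box. Your argument is correct and is the classical one: the convexity-plus-symmetry reduction for finite sums is exactly how the contraction principle is usually established, and your passage to infinite series via It\^o--Nisio, Kahane--Khintchine, and L\'evy's theorem is sound (if the right-hand side is infinite there is nothing to prove, and if it is finite then the series converges in $L^p$ and the Cauchy-tail argument goes through cleanly). Note, incidentally, that every application of the contraction principle in this paper involves only finite sums, so the infinite-series portion of your proof, while correct, is more than what is actually required here.
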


\subsubsection{Type and cotype of Banach spaces}
\label{subsubsection:type}

\begin{defin}
  Let $(E,\|\cdot\|)$ be a Banach space. We say that $E$ has \emph{type} $t \in [1,2]$ if there exists a constant $C_t > 0$ such that for every 
  finite sequence $(x_i)$ in $E$ and finite sequence $(\varepsilon_i)$ of random signs we have
  \begin{eqnarray*}
    \int_\Omega \left\| \sum_i \varepsilon_i x_i \right\|_E d\Pro \le C_t \left( \sum_i \|x_i\|^t \right)^{1/t}.
  \end{eqnarray*}
  In a similar fashion, we say that $E$ has \emph{cotype} $q \in [2,\infty]$ if there exists a constant $C_q > 0$ such that
  \begin{eqnarray*}
    \left( \sum_i \|x_i\|^q \right)^{1/q} \le C_q \int_\Omega \left\| \sum_i \varepsilon_i x_i \right\|_E d\Pro.
  \end{eqnarray*}
\end{defin}
The notion of type and cotype of Banach spaces was introduced by B. Maurey and G. Pisier in the 1970's and it has become an important part 
of analysis on Banach spaces. Out of this rich theory, we need the following results:
\begin{enumerate}
  \item[i)] If $Y$ is a $\sigma$-finite measure space and $E$ is a Banach space of type $r$ and cotype $s$, then $L^p(X;E)$ has type $\min\{p,r\}$ and cotype $\max\{p,s\}$.
  \item[ii)] If $E$ is a UMD space, then $E$ has a non-trivial type $s > 1$ and non-trivial cotype $t < \infty$.
\end{enumerate}
For proofs, see e.g. \cite[Chapter 9]{ledouxtalagrand} for i) and \cite[Theorem 11.1.14]{albiackalton}, \cite[Proposition 3]{rubiodefrancia} for ii).

\subsection{Structural constants}
 
We say that $c$ is a \emph{structural constant} if it depends only on the doubling constant $D$, the UMD constant $\beta_p$ for a fixed $p \in (1,\infty)$ and 
the type and cotype constants $C_t$ and $C_q$. We do not track the dependencies of our bounds on the structural constants and thus, we use the notation $a \lesssim b$ if 
$a \le cb$ for some structural constant $c$ and $a \eqsim b$ if $a \lesssim b \lesssim a$.

\section{Embedding cubes into larger cubes}
\label{section:embedding_cubes}

In this section we prove a decomposition result for dyadic systems using Theorem \ref{thm:adjacent_dyadic_systems}.
We formulate the result in such a way that it is easy to apply it in Section \ref{section:shift_operators} but we note
that it is simple to modify the proof for other similar decompositions.

Let $\D$ be a dyadic system with $\delta < 1 / (2 \cdot 168M^8)$ and $\{\D_\omega\}_\omega$ be adjacent dyadic systems for 
the same $\delta$ given by Theorem \ref{thm:adjacent_dyadic_systems}. Let us fix a number $m \ge 1$ and an injective 
function $\tau \colon \D \to \D$ such that $\tau(Q) \subseteq mB_Q$ for every $Q = Q_\alpha^k \in \D$ and $\tau \D^k \subseteq \D^k$ for 
every $k \in \Z$.

\begin{proposition}
  \label{proposition_dyadic_decomposition}
  The system $\D$ is a disjoint union of a bounded number of subcollections $\D_\lambda \subseteq \D$, $\lambda = (i,j,\omega)$, 
  with the following property: for every $Q \in \D_\lambda$ there exist cubes $P_Q, P_{\tau(Q)} \in \D(\omega)^{k-3}$ and $P_Q^* \in \D(\omega)^{k-3-T}$, where
  $2m\delta^T \le 1$, such that
\begin{eqnarray}
  \label{partition_property1} && Q \subseteq P_Q, \ \ \ \ \tau(Q) \subseteq P_{\tau(Q)}, \ \ \ \ P_Q \cup P_{\tau(Q)} \cup 2mB_Q \subseteq P_Q^*; \\
  \label{partition_property2} && \text{ if } Q_1, Q_2 \in \D_\lambda \cap \D^k, Q_1 \neq Q_2, \text{ then } (P_{Q_1} \cup P_{\tau(Q_1)}) \cap (P_{Q_2} \cup P_{\tau(Q_2)}) = \emptyset; \\
  \label{partition_property3} && \text{ if } Q_1, Q_2 \in \D_\lambda, Q_1 \subsetneq Q_2, \text{ then } P_{Q_1}^* \subseteq P_{Q_2}.
\end{eqnarray}
\end{proposition}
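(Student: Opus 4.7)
The plan is to attach to each $Q \in \D$ a triple $\lambda = (i,j,\omega)$, where $\omega$ is the index of the adjacent system that produces $P_Q, P_{\tau(Q)}, P_Q^*$, $j$ is the residue $\lev(Q) \bmod T$ for a suitable $T$, and $i$ is a sparseness label at each fixed level. First I fix $T \in \N$ with $2m\delta^T \le 1$. For each $Q \in \D^k$ I apply Theorem \ref{thm:adjacent_dyadic_systems} (in the form of Remark \ref{remark_adjacent}(2)) with $n=2$ to the pair of balls $B_Q, B_{\tau(Q)}$, both of radius $3\delta^k$ since $\tau \D^k \subseteq \D^k$, and with ancestor parameter $p = T$. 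This yields an index $\omega_Q \in \{1,\dots,K\}$ and cubes $P_Q, P_{\tau(Q)} \in \D(\omega_Q)^{k-3}$ with $B_Q \subseteq P_Q$ and $B_{\tau(Q)} \subseteq P_{\tau(Q)}$, together with the ancestor $P_Q^* := P_Q^{(T)} \in \D(\omega_Q)^{k-3-T}$ satisfying $\delta^{-T}B_Q \subseteq P_Q^*$; since $2m \le \delta^{-T}$ this gives $2mB_Q \subseteq P_Q^*$. The remaining inclusion $P_{\tau(Q)} \subseteq P_Q^*$ comes for free: both cubes contain the nonempty set $\tau(Q) \subseteq mB_Q$, they belong to the same system $\D(\omega_Q)$, and $P_Q^*$ sits at the earlier generation, so property iv) of Theorem \ref{thm:dyadic_systems} forces the nesting. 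This settles \eqref{partition_property1}.

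Next I define the subcollections. For each pair $(j,\omega) \in \{0,\dots,T-1\} \times \{1,\dots,K\}$, set
\[
  \D_{j,\omega} \coloneqq \{Q \in \D \colon \lev(Q) \equiv j \pmod{T}, \ \omega_Q = \omega\}.
\]
Inside each fixed level $k$ of $\D_{j,\omega}$ the centres $\{x_Q\}$ form a $\delta^k$-separated set, so Lemma \ref{lemma:separating_maximal_collections} splits them into $N = N(M,\delta,T)$ subsets whose centres are $6\delta^{k-3-T}$-separated. Labelling these subsets by $i \in \{1,\dots,N\}$ independently in each level yields the desired $\D_\lambda$ for $\lambda = (i,j,\omega)$, with at most $KTN$ subcollections in total, a count depending only on $M$, $\delta$ and $m$.

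To verify \eqref{partition_property2}, take distinct $Q_1, Q_2 \in \D_\lambda \cap \D^k$. By construction $d(x_{Q_1}, x_{Q_2}) \ge 6\delta^{k-3-T}$; if $P_{Q_1}^* = P_{Q_2}^*$ were to hold, both centres would lie in one ball of radius $3\delta^{k-3-T}$, a contradiction. Hence $P_{Q_1}^* \ne P_{Q_2}^*$ and, as distinct cubes at the same level of $\D(\omega)$, they are disjoint; combined with the inclusions $P_{Q_i} \cup P_{\tau(Q_i)} \subseteq P_{Q_i}^*$ from \eqref{partition_property1} this yields \eqref{partition_property2}. For \eqref{partition_property3}, suppose $Q_1 \subsetneq Q_2$ both lie in $\D_\lambda$. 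Then $\lev(Q_1) > \lev(Q_2)$ and $\lev(Q_1) \equiv \lev(Q_2) \pmod{T}$, so $\lev(Q_1) \ge \lev(Q_2) + T$, i.e.\ $\lev(P_{Q_1}^*) = \lev(Q_1) - 3 - T \ge \lev(Q_2) - 3 = \lev(P_{Q_2})$ inside the common system $\D(\omega)$. Since $\emptyset \ne Q_1 \subseteq P_{Q_1}^* \cap P_{Q_2}$, property iv) of Theorem \ref{thm:dyadic_systems} again forces $P_{Q_1}^* \subseteq P_{Q_2}$.

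The only genuinely delicate step is the first one: a priori $P_{\tau(Q)}$ is the cube assigned by the adjacent system to $B_{\tau(Q)}$ rather than to $B_Q$, and one might worry that it leaks out of $P_Q^*$. The resolution is that Theorem \ref{thm:adjacent_dyadic_systems} applied with $n=2$ places both $P_Q$ and $P_{\tau(Q)}$ in the \emph{same} dyadic system $\D(\omega_Q)$, so the nesting property of Theorem \ref{thm:dyadic_systems} automatically lifts the trivial containment of the set $\tau(Q)$ inside $P_Q^*$ into the containment of the whole cube $P_{\tau(Q)}$, without any need to enlarge $T$ beyond the condition $2m\delta^T \le 1$.
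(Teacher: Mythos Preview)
Your argument is correct, but the route differs from the paper's in one quantitatively important respect. You obtain the sparseness label $i$ by separating the centres $\{x_Q\}_{Q \in \D^k}$ at scale $6\delta^{k-3-T}$, so that distinct $P_Q^*$'s are forced to be disjoint; Lemma~\ref{lemma:separating_maximal_collections} then gives $N = N(M,\delta,T)$ pieces, and since $\delta^{-T} \sim 2m$ this $N$ is polynomial in $m$. The paper instead proves a separate Lemma showing that $\D$ splits into $L$ collections $\Q_i$ with $L$ \emph{independent of $m$}: it separates at the fixed scale $12\delta^{k-3}$ and then, in two further refinement steps, uses the injectivity of $\tau$ (so the centres $x_{\tau(P)}$ are still $\delta^k$-separated) to ensure that $3\delta^{-3}B_{R_1} \cap 3\delta^{-3}B_{R_2} = \emptyset$ for $R_l \in \{Q_l,\tau(Q_l)\}$. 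Since the cubes $P_Q, P_{\tau(Q)} \in \D(\omega)^{k-3}$ sit inside $2\delta^{-3}B_Q$ and $2\delta^{-3}B_{\tau(Q)}$ respectively, this already gives \eqref{partition_property2} without ever passing through the large cube $P_Q^*$.

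The payoff is in the total count: the paper gets $L \cdot K \cdot 4T$ subcollections with only the factor $T \sim \log(2m)$ depending on $m$, whereas your construction gives $N \cdot K \cdot T$ with $N$ polynomial in $m$. For the proposition as literally stated (``a bounded number'') your proof suffices, but in the application to Theorem~\ref{thm:Lp-boundedness_of_shift_operators} the count enters as $(\#\text{subcollections})^{1/t-1/q}$, so your version would yield only a polynomial bound $\|T_m f\|_p \lesssim m^c \|f\|_p$ rather than the desired logarithmic one. (A minor aside: your use of residues mod $T$ for the $j$-index already suffices for \eqref{partition_property3}; the paper's choice of mod $4T$ is not essential.)
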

In other words, we split the collection $\D$ into sparse subcollections $\D_\lambda$ such that we can embed 
every cube $Q \in \D_\lambda$ and its image $\tau(Q)$ into some larger cubes $P_Q$ and $P_{\tau(Q)}$ such that $P_Q$ and $P_{\tau(Q)}$ belong
to the same dyadic system and they have a mutual dyadic ancestor $P_Q^*$. 

We form the sets $\D_\lambda$ with the help of next technical lemma.
\begin{lemma}
  \label{lemma:partition_of_dyadic_system}
  The collection $\D$ is a disjoint union of $L = L(X)$ subcollections $\Q_i$ such that for every $k \in \Z$ and $Q_1,Q_2 \in \Q_i \cap \D^k$ we have
  \begin{eqnarray*}
    3\delta^{-3}B_{R_1} \cap 3\delta^{-3} B_{R_2} = \emptyset
  \end{eqnarray*}
  where $R_1 \in \{Q_1, \tau(Q_1)\}$ and $R_2 \in \{Q_2, \tau(Q_2)\}$, $R_1 \neq R_2$, and the number $L$ is independent of $m$.
\end{lemma}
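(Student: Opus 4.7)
\medskip
\noindent
\textbf{The plan} is to reduce the lemma to a graph-coloring problem, level by level. For each $k \in \Z$ I define a conflict graph $G_k$ on $\D^k$ by declaring two distinct cubes $Q_1, Q_2 \in \D^k$ to be adjacent if there exist $R_i \in \{Q_i, \tau(Q_i)\}$ with $R_1 \neq R_2$ and $d(x_{R_1}, x_{R_2}) < 18\delta^{k-3}$. Since $3\delta^{-3}B_R = B(x_R, 9\delta^{k-3})$ for any $R \in \D^k$, the desired disjointness $3\delta^{-3}B_{R_1} \cap 3\delta^{-3}B_{R_2} = \emptyset$ is precisely the non-adjacency condition in $G_k$. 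Thus a proper $L$-coloring of each $G_k$ (sharing the same palette $\{1,\dots,L\}$ across all levels) yields the subcollections $\Q_i := \bigcup_{k\in\Z} \Q_i^k$, where $\Q_i^k$ is the $i$-th color class at level $k$.

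The core computation is a uniform degree bound for $G_k$. By Theorem 2.6, the centers $\{x_Q : Q \in \D^k\}$ form a $\delta^k$-separated set, so Lemma \ref{dms_properties}(2) provides a constant $C_1 = C_1(M,\delta)$ such that any ball of radius $18\delta^{k-3}$ contains at most $C_1$ of these centers. Fix $Q \in \D^k$. A neighbor $Q' \neq Q$ in $G_k$ must have either $x_{Q'}$ or $x_{\tau(Q')}$ inside $B(x_Q, 18\delta^{k-3}) \cup B(x_{\tau(Q)}, 18\delta^{k-3})$, a region containing at most $2C_1$ level-$k$ centers. Each such center $z$ is the center of a unique level-$k$ cube $Q_z$, and since $\tau$ is injective there is at most one cube in $\tau^{-1}(\{Q_z\})$. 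Hence each admissible center yields at most two candidate neighbors, giving $\deg_{G_k}(Q) \le 4C_1$.

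Once this bound is established, a standard greedy coloring finishes the proof: enumerate $\D^k$ as $(Q^k_n)_{n}$ and assign to $Q^k_n$ the smallest color in $\{1,\dots,4C_1+1\}$ not used by any earlier-colored neighbor; this is always possible because $Q^k_n$ has at most $4C_1$ neighbors. Setting $L := 4C_1 + 1$ produces the required partition.

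\medskip
\noindent
\textbf{The main subtlety}, and the reason the lemma is not vacuous, is the claim that $L$ is independent of $m$. A priori one might fear that as $m$ grows, $\tau(Q)$ can be placed far from $Q$ inside the enormous ball $mB_Q$, and that this proliferation of long-range links would swell the number of colors needed. The degree calculation above side-steps this entirely: it only invokes the $\delta^k$-separation of $\D^k$-centers within a fixed ball of radius $18\delta^{k-3}$ (a purely local, $m$-free quantity controlled by $M$ and $\delta$) together with the injectivity of $\tau$ (which ensures each far-away image $\tau(Q')$ can be ``charged'' to at most one $Q'$). The reach $m$ of $\tau$ never enters the bound, which is exactly what the lemma claims.
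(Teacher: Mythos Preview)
Your argument is correct, and the key observations---$\delta^k$-separation of level-$k$ centers, geometric doubling to bound the number of centers in a ball of radius $18\delta^{k-3}$, and injectivity of $\tau$ to control preimages---are exactly the ones the paper relies on. The packaging differs: the paper splits $\D^k$ in three successive passes, first into $12\delta^{k-3}$-separated subsets via Lemma~\ref{lemma:separating_maximal_collections} (handling $Q$--$P$ conflicts), then refining to kill $Q$--$\tau(P)$ conflicts, and once more for $\tau(Q)$--$\tau(P)$ conflicts, each time using a bounded-overlap count to split into boundedly many pieces. Your single conflict graph captures all three conflict types at once, and the greedy coloring plays the same role as the paper's repeated application of Lemma~\ref{lemma:separating_maximal_collections}. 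Your route is arguably more economical; the paper's has the minor advantage of reusing an already-stated lemma rather than introducing graph language. Either way, the crucial point---that $m$ never enters the degree/overlap bound because injectivity of $\tau$ lets each nearby center be charged to at most one preimage---is identical in both proofs.
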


\begin{proof}
  Basically, we only need to use basic properties of geometrically doubling metric spaces with the help of the observation that if $Q, P \in \D^k$ and $d(x(Q),x(P)) \ge 12\delta^{k-3}$,
  then $3\delta^{-3}B_Q \cap 3\delta^{-3}B_P = \emptyset$.
  
  Let $k \in \Z$ be fixed. For any subcollection $\Q \subseteq \D^k$ and any set $A$ of center points of cubes, let us denote 
  \begin{eqnarray*}
    Y_\Q &\coloneqq& \{ x(Q) \colon Q \in \Q\}, \\
    \D_A &\coloneqq& \{ Q \in \D \colon x(Q) \in A\}.
  \end{eqnarray*}
  We split the set $Y_{\D^k}$ into smaller sets in three steps. To keep our notation simple, $i$ is an index whose role may change from one occurence to the next.
  \begin{enumerate}
    \item[1)] By Lemma \ref{lemma:separating_maximal_collections}, we can split the $\delta^k$-separated set $Y_{\D^k}$ into a bounded number of $12\delta^{k-3}$-separated subsets $Y_{i,k}^1$.
    \item[2)] For every $Q \in \D_{Y_{i,k}^1}$, the ball $3\delta^{-3} B_Q$ intersects at most a bounded number of balls $3\delta^{-3}B_{\tau(P)}$ where $P \in \D_{Y_{i,k}^1}$. Thus, we can split the set
              $Y_{i,k}^1$ into a bounded number of subsets $Y_{i,k}^2$ such that $3\delta^{-3} B_Q \cap 3\delta^{-3} B_{\tau(P)} = \emptyset$ for every $Q,P \in \D_{Y_{i,k}^2}$, $Q \neq \tau(P)$.
    \item[3)] For every $Q \in \D_{Y_{i,k}^2}$, the ball $3\delta^{-3} B_{\tau(Q)}$ intersects at most a bounded number of balls $3\delta^{-3} B_{\tau(P)}$, $P \in \D_{Y_{i,k}^2}$. Thus, we can split the set 
              $Y_{i,k}^2$ into a bounded number of subsets $Y_{i,k}^3$ such that $3\delta^{-3} B_{\tau(Q)} \cap 3\delta^{-3} B_{\tau(P)} = \emptyset$ for every $Q,P \in \D_{Y_{i,k}^3}$, $Q \neq P$.
  \end{enumerate}
  Now we can set $\Q_i \coloneqq \bigcup_{k \in \Z} \D_{Y_{i,k}^3}$ for every $i$.
\end{proof}

Let $\{\Q_i\}_i$ be the partition of $\D$ given by the previous lemma and let $T \in \N$, $T \ge 1$, be the smallest number such that $$ 2m \delta^T \le 1.$$
Recall Theorem \ref{thm:adjacent_dyadic_systems} and denote
\begin{eqnarray*}
  \gamma(R) \coloneqq \min\left\{ \omega \colon Q_{B_R}, Q_{B_{\tau(R)}} \in \D(\omega), \delta^{-T} B_R \subseteq Q_{B_R}^{(T)}\right\}
\end{eqnarray*}
for every cube $R \in \D$ and
\begin{eqnarray*}
  \Q_{i,\omega} \coloneqq \{ R \in \Q_i \colon \gamma(R) = \omega\}
\end{eqnarray*}
for every $i = 1,2,\ldots,L$ and $\omega = 1,2,\ldots,K$. Then the collections $\Q_{i,\omega}$ satisfy properties \eqref{partition_property1} and \eqref{partition_property2}
but they are still not suitable for property \eqref{partition_property3}. Thus, we split collections $Q_{i,\omega}$ into smaller collections whose cubes 
have large enough generation gaps: we set
\begin{eqnarray*}
  \D_{i,j,\omega} \coloneqq \bigcup_{k \in \Z} \left( \Q_{i,\omega} \cap \D^{j + 4kT} \right)
\end{eqnarray*}
for every $j = 0,1,\ldots,4T-1$. Notice that the indices $i$, $j$ and $\omega$ are independent of each other.

\begin{proof}[Proof of Proposition \ref{proposition_dyadic_decomposition}]
  Clearly we only need to show the claim for the collections $\D_{i,0,\omega} \eqqcolon \D_i$. Recall 

  Notice first that 
    $$2m \cdot r(B_Q) = 6m \delta^{4kT} \le \delta^{-T} 3\delta^{4kT} = \delta^{-T} \cdot r(B_Q)$$ 
  for every $Q \coloneqq Q_\alpha^{4kT} \in \D_i$. Thus, by Remark \ref{remark_adjacent}
  and the definition of $\D_i$, for every cube $Q \in \D_i$ there exist cubes $P_Q, P_{\tau(Q)} \in \D(\omega)^{4kT - 3}$ such that 
  \begin{eqnarray*}
    B_Q \subseteq P_Q, \ \ \ \ B_{\tau(Q)} \subseteq P_{\tau(Q)}, \ \ \ \ 2mB_Q \subseteq P_Q^{(T)} \eqqcolon P_Q^*.
  \end{eqnarray*}
  Let us then show that the cubes $P_Q$, $P_{\tau(Q)}$ and $P_Q^*$ satisfy properties \eqref{partition_property1} - \eqref{partition_property3}.
  \begin{enumerate}
    \item[\underline{\eqref{partition_property1}} \ ] Since $Q, \tau(Q) \subseteq 2mB_Q$, we know that $P_Q \cap P_Q^* \neq \emptyset$ and $P_{\tau(Q)} \cap P_Q^* \neq \emptyset$. 
                                                      Thus, since $\D(\omega)$ is a dyadic system and $\text{lev}(P_Q^*) < \text{lev}(P_Q) = \text{lev}(P_{\tau(Q)})$, 
                                                      we have $P_Q \cup P_{\tau(Q)} \subseteq P_Q^*$.
                                                      
                                                      \
  
    \item[\underline{\eqref{partition_property2}} \ ] Since $x(Q) \in P_Q$ for every cube $Q \in \D$, we have 
                                                        $$P_Q \subseteq B(x(P_Q),3\delta^{4kT-3}) \subseteq B(x(Q),6\delta^{4kT-3}) = 2\delta^{-3} B_Q$$
                                                      for every cube $Q \in \D$. Thus, the property \eqref{partition_property2} follows directly 
                                                      from Lemma \ref{lemma:partition_of_dyadic_system}.
                                                      
                                                      \
                                                      
    \item[\underline{\eqref{partition_property3}} \ ] Suppose that $R \subsetneq Q \coloneqq Q_\alpha^{4kT}$. Then $\text{lev(R)} \ge (4k+4)T$ and thus, 
                                                      $\text{lev}(P_R) \ge (4k+4)T - 3$ and
                                                       $$\text{lev}(P_R^*) \ge (4k+4)T - 3 - T \ge 4kT = \text{lev}(Q) \ge \text{lev}(P_Q)$$ 
                                                      since $T \ge 1$. In particular, $P_R^* \subseteq P_Q$ since $P_R^*, P_Q \in \D(\omega)$ and $\D(\omega)$ is a dyadic system.
  \end{enumerate}
\end{proof}
 
\section{$L^p$-boundedness of shift operators}
\label{section:shift_operators}

In this section, we show that with the help of Proposition \ref{proposition_dyadic_decomposition} we can give a straightforward proof for the $L^p$-boundedness of the shift operators in doubling 
metric measure spaces. We follow some ideas of \cite{figiel_haar} and \cite{lechnerpassenbrunner} but mostly we rely on our own dyadic constructions.

Let $(X,d)$ be a metric space, $\mu$ a doubling Borel measure on $X$ and $(E,\|\cdot\|)$ an UMD space. Since the doubling property of $\mu$ implies the geometrical 
doubling property of $d$, there exists a finite geometrical doubling constant $M$. Thus, we may fix a dyadic system $\D$ for $\delta < 1 / (2 \cdot 168M^8)$ and
adjacent dyadic systems $\{\D(\omega)\}_\omega$ given by Theorem \ref{thm:dyadic_systems} for the same $\delta$.

\subsection{Haar functions}
There are various different ways to construct Haar functions in metric spaces (see e.g. \cite[Section 5]{aimaretall}) 
and thus, we do not want to fix any particular construction. We do, however, refer to the construction in \cite[Section 4]{hytonennonhomogeneous} 
(with the choice $b \equiv 1$) for a system of Haar functions that satisfy the properties in the following definition. In \cite{hytonennonhomogeneous} the 
construction is done in $\R^n$ for a non-doubling measure but it is simple to generalize the result for our setting.

\begin{defin}
  A collection of functions $h_Q^\theta \colon X \to \R$, $Q \coloneqq Q_\alpha^k \in \D$, $\theta = 1,\ldots,n(Q) \le \Theta$, is a \emph{system of Haar functions} if it satisfies the following properties:
  for every $Q$ and $\theta$ we have
  \begin{enumerate}
    \item[$\bullet$] $\text{supp} \, h_Q^\theta \subseteq Q$;
    \item[$\bullet$] $h_Q^\theta$ is constant on every child cube $Q_\beta^{k+1} \subseteq Q$;
    \item[$\bullet$] $\int h_Q^\theta = 0 = \int h_Q^\theta h_Q^{\theta'}$ \ if \ $\theta \neq \theta'$;
    \item[$\bullet$] $\| h_Q^\theta \|_2 = 1$;
  \end{enumerate}
  and the space of finite linear combinations of the functions $h_Q^\theta$ is dense in $L^2(X;E)$. 
\end{defin}
The number $\Theta$ in the previous definition depends only on $M$ or, more precisely, the maximum number of child cubes 
$Q_\beta^{k+1}$ a cube $Q_\alpha^k$ can have. Henceforth, we fix some $\theta = \theta(Q)$ for each $Q \in \D$ and drop 
the dependency on $\theta$ in the notation.

Let $h_Q = \sum_k v_k 1_{Q_k}$ be a Haar function, where $Q_k$ are the child cubes of $Q$. The following properties are straightforward consequences 
of the previous definition:
\begin{eqnarray}
  \| h_Q \|_\infty &=& \max |v_k| \ \ \eqsim \ \ \frac{1}{\mu(Q)^{1/2}};\\
  \| h_Q \|_1 &\eqsim& \mu(Q)^{1/2}.
\end{eqnarray}
In particular, 
\begin{eqnarray}
  \label{absolute_value_of_haar_functions} \frac{1_{Q_k}(x)}{\mu(Q_k)^{1/2}} \lesssim |h_Q(x)| \lesssim \frac{1_Q(x)}{\mu(Q)^{1/2}} \ \ \ \ \ \text{ for every } x \in Q \text{ and some } Q_k.
\end{eqnarray}
The previous properties give us the following lemma:
\begin{lemma}
  \label{lemma:reduction_from_haar_to_indicators}
  For every $p \in (1,\infty)$ and finite collection of cubes $Q$ we have
  \begin{eqnarray*}
    \left\| \sum_Q x_Q h_Q \right\|_p \eqsim \left\| \sum_Q \varepsilon_Q x_Q \frac{1_Q}{\mu(Q)^{1/2}} \right\|_{\Omega,p}.
  \end{eqnarray*}
\end{lemma}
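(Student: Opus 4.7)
The plan is to prove the equivalence in two reductions: first, replace the deterministic Haar sum by its fully randomized version $\sum_Q \varepsilon_Q x_Q h_Q$; second, exchange each Haar function for the normalized indicator $1_Q/\mu(Q)^{1/2}$ using Kahane's contraction principle.

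For the first reduction I would group the Haar functions by generation: set $D_k \coloneqq \sum_{Q \in \D^k} x_Q h_Q$. Since each $h_Q$ with $Q \in \D^k$ is supported in $Q$, is constant on the children at level $k+1$, and has integral zero, the sequence $(D_k)$ is a martingale difference sequence for the filtration $(\F_k)$ generated by $\D$. The UMD property in its randomized form \rref{inequality_random_signs} then yields
\[
\left\| \sum_Q x_Q h_Q \right\|_p \eqsim \left\| \sum_k \varepsilon_k D_k \right\|_{\Omega,p}.
\]
To upgrade from one random sign per level to one independent sign per cube, I would use that within a single generation the supports $\{Q : Q \in \D^k\}$ are pairwise disjoint, so at any fixed $x \in X$ only one term contributes: writing $Q_x^k$ for the unique cube of $\D^k$ containing $x$,
\[
\sum_k \varepsilon_k(\eta)\, D_k(x) = \sum_k \varepsilon_k(\eta)\, x_{Q_x^k} h_{Q_x^k}(x), \qquad \sum_Q \varepsilon_Q(\eta)\, x_Q h_Q(x) = \sum_k \varepsilon_{Q_x^k}(\eta)\, x_{Q_x^k} h_{Q_x^k}(x).
\]
For each fixed $x$ the two sign families $(\varepsilon_k)_k$ and $(\varepsilon_{Q_x^k})_k$ are i.i.d.\ Rademacher sequences, so they induce the same law on the Rademacher sum; integrating the resulting pointwise equality over $X$ gives $\|\sum_k \varepsilon_k D_k\|_{\Omega,p} = \|\sum_Q \varepsilon_Q x_Q h_Q\|_{\Omega,p}$.

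For the second reduction I would combine \rref{absolute_value_of_haar_functions} with the doubling of $\mu$: since the child $Q_k \ni x$ satisfies $\mu(Q_k) \eqsim \mu(Q)$, the quantity $c_Q(x) \coloneqq \mu(Q)^{1/2} h_Q(x)$ is, on $Q$, bounded above and below in absolute value by structural constants (it equals a bounded nonzero constant on each child). Freezing $x$ and rewriting $\sum_{Q \ni x} \varepsilon_Q x_Q h_Q(x) = \sum_{Q \ni x} \varepsilon_Q x_Q\, c_Q(x)/\mu(Q)^{1/2}$, two applications of Kahane's contraction principle (Theorem \ref{kahane_contraction_principle}) pointwise in $x$ — once with multipliers $c_Q(x)$, once with $c_Q(x)^{-1}$ — followed by integration over $X$ yield
\[
\left\| \sum_Q \varepsilon_Q x_Q h_Q \right\|_{\Omega,p} \eqsim \left\| \sum_Q \varepsilon_Q x_Q \frac{1_Q}{\mu(Q)^{1/2}} \right\|_{\Omega,p}.
\]
Chaining this with the first reduction proves the claim.

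The main obstacle is the sign-upgrade step in the first reduction: UMD only produces one random sign per martingale difference $D_k$, whereas the target requires independent signs indexed over all cubes. The disjointness of Haar supports within a single generation is essential, because it collapses each level sum to a single term at each point and thereby makes the level-indexed and cube-indexed Rademacher sums identically distributed. Once that is settled, the remaining step is a routine application of Kahane's contraction powered by the doubling-based comparison $|h_Q(x)| \eqsim 1_Q(x)/\mu(Q)^{1/2}$.
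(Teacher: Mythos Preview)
Your first reduction is fine and matches the paper's approach: the disjointness of cubes within a generation lets you pass freely between one random sign per level and one per cube, and the UMD property supplies the comparison with the deterministic sum.

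The gap is in the second reduction, in the direction
\[
\left\| \sum_Q \varepsilon_Q x_Q \frac{1_Q}{\mu(Q)^{1/2}} \right\|_{\Omega,p} \lesssim \left\| \sum_Q \varepsilon_Q x_Q h_Q \right\|_{\Omega,p}.
\]
You claim that $c_Q(x)=\mu(Q)^{1/2}h_Q(x)$ is ``a bounded nonzero constant on each child'' and then invoke Kahane with multipliers $c_Q(x)^{-1}$. But the Haar axioms in the paper do not force $h_Q$ to be nonzero on every child of $Q$: once a cube has three or more children, the orthogonal family $\{h_Q^\theta\}_\theta$ typically contains functions that vanish identically on some children (think of the standard wavelet construction). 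This is exactly why the estimate \eqref{absolute_value_of_haar_functions} only asserts the lower bound on \emph{some} child $Q_k$, not on all of $Q$. So $c_Q(x)^{-1}$ need not exist, and the contraction argument breaks.

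The paper repairs this direction with Stein's inequality (Theorem~\ref{inequality:stein}) rather than a pointwise bound: conditioning $|h_{Q}|$ on the $\sigma$-algebra generated by the cubes of level $\lev(Q)$ yields $\E[\,|h_Q|\mid\F\,]=1_Q\langle|h_Q|\rangle_Q$, and the \emph{average} $\langle|h_Q|\rangle_Q=\|h_Q\|_1/\mu(Q)\eqsim\mu(Q)^{-1/2}$ is bounded below even though $|h_Q|$ itself is not. One then applies Kahane with the bounded multipliers $(\mu(Q)^{1/2}\langle|h_Q|\rangle_Q)^{-1}$. Replacing your pointwise inversion step by this conditional-expectation step closes the gap.
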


\begin{proof}
  Let us denote $\sum_Q x_Q h_Q = \sum_k \sum_{\alpha} x_{Q_{\alpha}^k} h_{Q_{\alpha}^k}$ and let $(\varepsilon_Q)$ be a sequence of random signs. Then for every $y \in X$ and $k \in \Z$ there exists 
  at most one ${Q_{\alpha,y}^k}$ such that $h_{Q_{\alpha,y}^k}(y) \neq 0$. Let $\sigma_k^y \in \{-1,+1\}$ be such that $\sigma_k^y h_{Q_{\alpha,y}^k}(y) = |h_{Q_{\alpha,y}^k}(y)|$ for every $y \in X$ and $k \in \Z$.
  Then, for a fixed $y \in X$, $(\sigma_k^y \varepsilon_{Q_{\alpha,y}^k})_k$ is a sequence of random signs. Since the functions $h_Q$ form a martingale difference sequence and 
  by \eqref{absolute_value_of_haar_functions} we know that $|h_Q|\mu(Q)^{1/2} \lesssim 1$ for every $Q$, we have
  \begin{eqnarray*}
    \left\| \sum_Q x_Q h_Q \right\|_p^p &\eqsim& \int_X \int_\Omega \left\| \sum_k \sigma_k^y \varepsilon_{Q_{\alpha,y}^k}(\eta) x_{Q_{\alpha,y}^k} h_{Q_{\alpha,y}^k}(y) \right\|_E^p \, d\Pro(\eta) \, d\mu(y) \\
                                        &=& \int_X \int_\Omega \left\| \sum_k \varepsilon_{Q_{\alpha,y}^k}(\eta) \frac{x_{Q_{\alpha,y}^k}}{\mu(Q_{\alpha,y}^k)^{1/2}} \left| h_{Q_{\alpha,y}^k}(y) \right| \mu({Q_{\alpha,y}^k})^{1/2} \right\|_E^p \, d\Pro(\eta) \, d\mu(y) \\
                                        &\lesssim& \left\| \sum_Q \varepsilon_Q x_Q \frac{1_Q}{\mu(Q)^{1/2}} \right\|_{\Omega,p}^p. 
  \end{eqnarray*}
  by the UMD property of $E$, Fubini's theorem and Kahane's contraction principle. Let us then denote $\sum_Q x_Q h_Q = \sum_{i=1}^N x_i h_{Q_i}$ where $\text{lev}(Q_1) \le \text{lev}(Q_2) \le \ldots \le \text{lev}(Q_N)$. Then by Lemma \ref{lemma:conditional_expectation} we have $\E[ |h_{Q_i}| | \F_i] = 1_Q \langle |h_Q| \rangle_Q$
  where $\F_i$ be the $\sigma$-algebra generated by $\D^{\text{lev}(Q_i)}$. Thus, since $1 / (\mu(Q)^{1/2} \langle |h_Q| \rangle_Q ) \eqsim 1$, the previous estimates, 
  Stein's inequality and Kahane's contraction principle (in this order) give us
  \begin{eqnarray*}
    \left\| \sum_{i=1}^N x_i h_{Q_i} \right\|_p^p \ \ \eqsim \ \ \left\| \sum_{i=1}^N \varepsilon_i x_i |h_{Q_i}| \right\|_{\Omega,p}^p &\gtrsim& \left\| \sum_{i=1}^N \varepsilon_i x_i \E[|h_{Q_i}| | \F_i] \right\|_{\Omega,p}^p \\
    &=& \left\| \sum_i^N \varepsilon_i x_i \frac{1_{Q_i}}{\mu(Q_i)^{1/2}} \mu(Q_i)^{1/2} \langle |h_{Q_i}| \rangle_{Q_i} \right\|_{\Omega,p}^p \\
    &\gtrsim& \left\| \sum_i^N \varepsilon_i x_i \frac{1_{Q_i}}{\mu(Q_i)^{1/2}}\right\|_{\Omega,p}^p,
  \end{eqnarray*}
  which proves the claim.
\end{proof}

\subsection{Shift operators}
Let us fix the number $m \ge 1$ and let $\tau \colon \D \to \D$ be an injective function such that
\begin{enumerate}
  \item[1)] $\tau \D^k \subseteq \D^k$ for every $k \in \Z$;
  \item[2)] for every $Q \in \D$ we have $\tau(Q) \subseteq mB_Q$;
  \item[3)] the measures of cubes $Q$ and $\tau(Q)$ are approximately the same:
            \begin{eqnarray}
              \label{tau_condition} \mu(Q) \eqsim \mu(\tau(Q)).
            \end{eqnarray}
\end{enumerate}
Let $\{h_Q\}_{Q \in \D}$ be a system of Haar functions. Then we can define the shift operator $T \coloneqq T_\tau$ as 
the linear extension of the operator $\hat{T}$,
\begin{eqnarray*}
  \hat{T} h_Q = h_{\tau(Q)}.
\end{eqnarray*}
It is easy to see that without condition \eqref{tau_condition} an estimate of the type \eqref{norm_estimate_shift_operator} is out of reach for all $p \in (1,\infty)$. 
More precisely: by property \eqref{absolute_value_of_haar_functions} we have $\| h_Q \|_p \eqsim \mu(Q)^{1/p - 1/2}$ for every cube $Q$ and thus, without condition 
\eqref{tau_condition} the estimate cannot hold simultaneously for all $p \in (1,2]$ and for all $q \in (2,\infty)$.
We note that the condition \eqref{tau_condition} is automatically valid in metric measure spaces that satisfy an Ahlfors-regularity type condition.

\subsection{$L^p$-boundedness of shift operators}

Using Proposition \ref{proposition_dyadic_decomposition} and Lemma \ref{lemma:reduction_from_haar_to_indicators} we can now prove the following theorem quite easily.

\begin{theorem}
  \label{thm:Lp-boundedness_of_shift_operators}
  Let $p \in (1,\infty)$ and $f \in L^p(X;E)$. Then
  \begin{eqnarray*}
    \| Tf \|_p \le C \left( \log (2m) + 1 \right)^\alpha \| f \|_p
  \end{eqnarray*}
  where $C = C(p,X,E,\alpha)$, $\alpha = 1 / \min\{t_E,p\} - 1 / \max\{q_E,p\} < 1$ and $t_E$ and $q_E$ are the type and cotype of the space $E$.
\end{theorem}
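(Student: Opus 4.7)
The plan is to expand $f$ in the Haar basis, $f = \sum_Q x_Q h_Q$, so that $Tf = \sum_Q x_Q h_{\tau(Q)}$, and then use Lemma \ref{lemma:reduction_from_haar_to_indicators} on both sides together with $\mu(Q) \eqsim \mu(\tau(Q))$ to reduce the claim to the random-indicator estimate
\begin{align*}
  \BNorm{\sum_Q \varepsilon_Q x_Q \frac{1_{\tau(Q)}}{\mu(Q)^{1/2}}}{\Omega,p} \lesssim (\log(2m)+1)^\alpha \BNorm{\sum_Q \varepsilon_Q x_Q \frac{1_Q}{\mu(Q)^{1/2}}}{\Omega,p}.
\end{align*}
Proposition \ref{proposition_dyadic_decomposition} splits $\D$ into boundedly many subcollections $\D_\lambda$, $\lambda = (i,j,\omega)$; by the triangle inequality it suffices to prove the display for the sum restricted to each single $\D_\lambda$. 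Fix one such $\lambda$, let $T$ be the smallest integer with $2m\delta^T \le 1$ (so $T \eqsim \log(2m)$), and denote $\F_k = \sigma(\D(\omega)^k)$, $E_k = \E[\cdot\mid\F_k]$, $D_k = E_{k+1} - E_k$.

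For $Q \in \D_\lambda$ with $\lev(Q) = k$, Proposition \ref{proposition_dyadic_decomposition} supplies $P_Q, P_{\tau(Q)} \in \D(\omega)^{k-3}$ and a common ancestor $P_Q^* \in \D(\omega)^{k-3-T}$. Telescope
\begin{align*}
  \frac{1_{\tau(Q)}}{\mu(Q)^{1/2}} = E_{\lev(P_Q^*)} \frac{1_{\tau(Q)}}{\mu(Q)^{1/2}} + \sum_{\ell \ge \lev(P_Q^*)} D_\ell \frac{1_{\tau(Q)}}{\mu(Q)^{1/2}}
\end{align*}
and analogously for $1_Q/\mu(Q)^{1/2}$. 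The low-frequency term equals $\mu(\tau(Q))/(\mu(P_Q^*)\mu(Q)^{1/2}) \cdot 1_{P_Q^*}$ and by \eqref{tau_condition} is comparable to the corresponding conditional expectation of $1_Q/\mu(Q)^{1/2}$; the construction of $\D_\lambda$ forces the levels $\lev(P_Q^*)$ to be non-decreasing as a function of $\lev(Q)$, so Stein's inequality (Theorem \ref{inequality:stein}) bounds the low-frequency contribution by the right-hand side of the display.

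The main work is the high-frequency tail $\sum_\ell D_\ell(1_{\tau(Q)}/\mu(Q)^{1/2})$. Using the disjointness \eqref{partition_property2} and the nesting \eqref{partition_property3}, the double sum over $Q \in \D_\lambda$ and $\ell$ reorganises into a sum over a bounded number of absolute $\omega$-levels of operators that act as single-level $\omega$-martingale transforms on the relevant Haar system; each such transform is $L^p$-bounded via the UMD property and Lemma \ref{lemma:reduction_from_haar_to_indicators}, yielding a single-level bound by the target norm. Only $O(T)$ of these levels contribute nontrivially: those with relative level $0 \le j < T$ come from the martingale steps between $P_Q^*$ and $P_{\tau(Q)}$, while levels $j \ge T$ produce a remainder supported in $P_{\tau(Q)}$ which is handled by a direct parallel comparison with the Haar expansion of $1_Q$ on $P_Q$ (both have unit $L^2$-mass up to constants). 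Summing crudely via the triangle inequality over these $T$ levels would yield a factor of $T$; inserting independent outer Rademacher signs $\tilde\varepsilon_j$ (allowed by UMD because the levels are distinct) and exploiting the non-trivial type $\min(t_E,p) > 1$ and cotype $\max(q_E,p) < \infty$ of $L^p(X;E)$ sharpens this to $T^{1/\min(t_E,p)-1/\max(q_E,p)} \eqsim (\log(2m)+1)^\alpha$.

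The main obstacle is exactly this sublinear-in-$T$ bound: the naive triangle inequality over the $T$ martingale levels only gives a linear factor, and improving it to $T^\alpha$ with $\alpha<1$ is where both non-trivial type and non-trivial cotype of the range space must interact with the UMD randomisation, in the spirit of Figiel's original Euclidean argument.
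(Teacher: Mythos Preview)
Your plan contains a structural miscount that undermines the bound. The number of subcollections $\D_\lambda$ produced by Proposition~\ref{proposition_dyadic_decomposition} is \emph{not} bounded independently of $m$: the index is $\lambda=(i,j,\omega)$ with $i\le L$, $\omega\le K$ (structural), but $j\in\{0,1,\ldots,4T-1\}$, so there are $4TKL\eqsim T\eqsim\log(2m)$ of them. If you pass to a single $\D_\lambda$ by the triangle inequality and then extract another factor $T^\alpha$ from a telescoping sum over $T$ martingale levels, you end up with roughly $T\cdot T^\alpha$, not $T^\alpha$. Moreover, the type/cotype trick you invoke for the telescope is not correctly set up: to get the exponent $1/t-1/q$ (rather than $1/t$) you need, after H\"older, a cotype estimate $(\sum_j\|B_j\|^q)^{1/q}\lesssim\|f\|$, which requires the $B_j$ on the right-hand side to be randomisable pieces that reassemble into $f$. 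In your sketch each level-$j$ bound is simply $\lesssim\|f_{\D_\lambda}\|$, so the cotype step has nothing to bite on.

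The paper places the $T^\alpha$ loss entirely in the summation over the $\eqsim T$ indices $\lambda$ and proves a bound with \emph{no} $T$-dependence inside each $\D_\lambda$, without any telescoping. The device is a modified filtration: for $Q_k\in\D_\lambda$ one takes $\F_k$ to be generated by the partition $\D(\omega)^{\lev(Q_k)-3}$ in which, for every $Q_l$ at that level, the two atoms $P_{Q_l}$ and $P_{\tau(Q_l)}$ have been merged into the single atom $P_{Q_l}\cup P_{\tau(Q_l)}$. Property~\eqref{partition_property2} makes this a genuine partition, property~\eqref{partition_property3} makes $(\F_k)$ nested, and then
\[
  \E[1_{Q_k}\mid\F_k]\;=\;1_{P_{Q_k}\cup P_{\tau(Q_k)}}\,\frac{\mu(Q_k)}{\mu(P_{Q_k}\cup P_{\tau(Q_k)})}\;\eqsim\;1_{P_{Q_k}\cup P_{\tau(Q_k)}},
\]
with the same two-sided comparison for $1_{\tau(Q_k)}$ by \eqref{tau_condition}. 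One application of Kahane's contraction (replacing $1_{\tau(Q_k)}$ by the larger $1_{P_{Q_k}\cup P_{\tau(Q_k)}}$) followed by one application of Stein's inequality converts the $\tau(Q_k)$-indicators into $Q_k$-indicators with a uniform constant. Only then does the paper sum over $\lambda$: since the pieces $\sum_{Q\in\D_\lambda}x_Qh_Q$ are disjoint blocks of the Haar expansion of $f$, they can be randomised by UMD, so type gives $(\sum_\lambda\|\cdot\|^t)^{1/t}$ on the left, H\"older gives $(4TKL)^{1/t-1/q}$, and cotype recombines $(\sum_\lambda\|\cdot\|^q)^{1/q}\lesssim\|f\|_p$ on the right. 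Your low-frequency/high-frequency decomposition is therefore unnecessary; the whole comparison happens at a single coarse scale once the two relevant atoms are fused.
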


\begin{proof}
  Suppose that $f \in L^p(X;E)$. Then, by the properties of the Haar functions and Proposition \ref{proposition_dyadic_decomposition}, we may assume that the 
  function $f$ is of the form
  \begin{eqnarray*}
    f = \sum_{i=1}^L \sum_{j=0}^{4T-1} \sum_{\omega=1}^K \sum_{Q \in \D_{i,j,\omega}} x_Q h_Q
  \end{eqnarray*}
  where $x_Q \neq 0$ only for finitely many $Q$. Thus, we can denote $f = \sum_{i,j,\omega} \sum_{k=1}^n x_k h_{Q_k}$ where 
  $\lev(Q_1) \le \lev(Q_2) \le \ldots \le \lev(Q_n)$.
    
  For every $k = 1,2,\ldots,n$, let $\F_k$ be the $\sigma$-algebra generated by
  \begin{eqnarray*}
    F_k \coloneqq \left(\D(\omega)^{\lev(Q_k)-3} \setminus \underset{\lev(Q_l) = \lev(Q_k)}{\bigcup_{l=1,\ldots,n}} \left\{P_{Q_l}, P_{\tau(Q_l)}\right\} \right) \cup \underset{\lev(Q_l) = \lev(Q_k)}{\bigcup_{l=1,\ldots,n}} \left\{P_{Q_l} \cup P_{\tau(Q_l)}\right\}.
  \end{eqnarray*}
  Notice that if $\lev(Q_{k_1}) = \lev(Q_{k_2})$, then $F_{k_1} = F_{k_2}$. By property \eqref{partition_property2} we know that $F_k$ is a partition of the space $X$ and by property \eqref{partition_property3} we know that the sequence $(\F_k)$ is nested.
  Thus, for every $k = 1,2,\ldots,n$ we have
  \begin{eqnarray}
    \E[ 1_{Q_k} | \F_k] \, \overset{\ref{lemma:conditional_expectation}}{=} \, 1_{P_{Q_k} \cup P_{\tau(Q_k)}} \langle 1_{Q_k} \rangle_{P_{Q_k} \cup P_{\tau(Q_k)}} 
    \overset{\eqref{tau_condition}}{\eqsim} 1_{P_{Q_k} \cup P_{\tau(Q_k)}} \frac{\mu(Q_k)}{\mu(P_{Q_k})} \, \eqsim \, 1_{P_{Q_k} \cup P_{\tau(Q_k)}}. && \label{main_theorem_conditional_expectation}
  \end{eqnarray}
  In particular, 
  \begin{eqnarray*}
    \left\| \sum_k x_k h_{\tau(Q_k)} \right\|_p 
    \ \overset{\ref{lemma:reduction_from_haar_to_indicators}}{\eqsim} \ \left\| \sum_k \varepsilon_k \frac{x_k}{\mu(\tau(Q_k))^{1/2}} 1_{\tau(Q_k)} \right\|_{\Omega,p}
    &\overset{\ref{kahane_contraction_principle}}{\overset{\eqref{tau_condition}}{\lesssim}}& \left\| \sum_k \varepsilon_k \frac{x_k}{\mu(Q_k)^{1/2}} 1_{P_{Q_k} \cup P_{\tau(Q_k)}} \right\|_{\Omega,p} \\
    &\overset{\eqref{main_theorem_conditional_expectation}}{\eqsim}& \left\| \sum_k \varepsilon_k \frac{x_k}{\mu(Q_k)^{1/2}} \E[ 1_{Q_k} | \F_k] \right\|_{\Omega,p} \\
    &\overset{\ref{inequality:stein}}{\lesssim}& \left\| \sum_k \varepsilon_k \frac{x_{Q_k}}{\mu(Q_k)^{1/2}} 1_{Q_k} \right\|_{\Omega,p}.
  \end{eqnarray*}
  Hence, since by Section \ref{subsubsection:type} the space $L^p(X;E)$ has a non-trivial type $t > 1$ and a non-trivial cotype $q < \infty$, we have
  \begin{eqnarray*}
    \left\| \sum_{i,j,\omega} \sum_k x_k h_{\tau(Q_k)} \right\|_p  &\lesssim& \left( \sum_{i,j,\omega} \left\| \sum_k \varepsilon_k \frac{x_{Q_k}}{\mu(Q_k)^{1/2}} 1_{Q_k} \right\|_{\Omega,p}^t \right)^{1/t} \\
                                                                   &\le& \left( 4TKL \right)^{1/t - 1/q} \left( \sum_{\omega,i,j} \left\| \sum_k \varepsilon_k \frac{x_{Q_k}}{\mu(Q_k)^{1/2}} 1_{Q_k} \right\|_{\Omega,p}^q \right)^{1/q} \\
                                                                   &\lesssim& T^{1/t - 1/q} \left\| \sum_{\omega,i,j} \sum_k \varepsilon_k \frac{x_{Q_k}}{\mu(Q_k)^{1/2}} 1_{Q_k} \right\|_{\Omega,p} \\
                                                                   &\overset{\ref{lemma:reduction_from_haar_to_indicators}}{\lesssim}& (\log (2m) + 1)^{1/t - 1/q} \| f \|_p
  \end{eqnarray*}
  by Hölder's inequality.
\end{proof}

\bibliography{embedded}

\begin{thebibliography}{10}

\bibitem{aimaretall}
Hugo Aimar, Ana Bernardis, and Bibiana Iaffei.
\newblock Multiresolution approximations and unconditional bases on weighted
  {L}ebesgue spaces on spaces of homogeneous type.
\newblock {\em J. Approx. Theory}, 148(1):12--34, 2007.

\bibitem{albiackalton}
Fernando Albiac and Nigel~J. Kalton.
\newblock {\em Topics in {B}anach space theory}, volume 233 of {\em Graduate
  Texts in Mathematics}.
\newblock Springer, New York, 2006.

\bibitem{bourgain}
Jean Bourgain.
\newblock Some remarks on {B}anach spaces in which martingale difference
  sequences are unconditional.
\newblock {\em Ark. Mat.}, 21(2):163--168, 1983.

\bibitem{burkholder}
Donald~L. Burkholder.
\newblock A geometric condition that implies the existence of certain singular
  integrals of {B}anach-space-valued functions.
\newblock In {\em Conference on harmonic analysis in honor of {A}ntoni
  {Z}ygmund, {V}ol. {I}, {II} ({C}hicago, {I}ll., 1981)}, Wadsworth Math. Ser.,
  pages 270--286. Wadsworth, Belmont, CA, 1983.

\bibitem{christ}
Michael Christ.
\newblock A {$T(b)$} theorem with remarks on analytic capacity and the {C}auchy
  integral.
\newblock {\em Colloq. Math.}, 60/61(2):601--628, 1990.

\bibitem{clementetall}
P.~Cl{\'e}ment, B.~de~Pagter, F.~A. Sukochev, and H.~Witvliet.
\newblock Schauder decomposition and multiplier theorems.
\newblock {\em Studia Math.}, 138(2):135--163, 2000.

\bibitem{davidjourne}
Guy David and Jean-Lin Journ{\'e}.
\newblock A boundedness criterion for generalized {C}alder\'on-{Z}ygmund
  operators.
\newblock {\em Ann. of Math. (2)}, 120(2):371--397, 1984.

\bibitem{figiel_haar}
Tadeusz Figiel.
\newblock On equivalence of some bases to the {H}aar system in spaces of
  vector-valued functions.
\newblock {\em Bull. Polish Acad. Sci. Math.}, 36(3-4):119--131 (1989), 1988.

\bibitem{figiel_singular}
Tadeusz Figiel.
\newblock Singular integral operators: a martingale approach.
\newblock In {\em Geometry of {B}anach spaces ({S}trobl, 1989)}, volume 158 of
  {\em London Math. Soc. Lecture Note Ser.}, pages 95--110. Cambridge Univ.
  Press, Cambridge, 1990.

\bibitem{frazieretall}
M.~Frazier, Y.-S. Han, B.~Jawerth, and G.~Weiss.
\newblock The {$T1$} theorem for {T}riebel-{L}izorkin spaces.
\newblock In {\em Harmonic analysis and partial differential equations ({E}l
  {E}scorial, 1987)}, volume 1384 of {\em Lecture Notes in Math.}, pages
  168--181. Springer, Berlin, 1989.

\bibitem{hanhofmann}
Y.-S. Han and Steve Hofmann.
\newblock {${\rm T}1$} theorems for {B}esov and {T}riebel-{L}izorkin spaces.
\newblock {\em Trans. Amer. Math. Soc.}, 337(2):839--853, 1993.

\bibitem{hytonenframework}
Tuomas Hyt{\"o}nen.
\newblock A framework for non-homogeneous analysis on metric spaces, and the
  {RBMO} space of {T}olsa.
\newblock {\em Publ. Mat.}, 54(2):485--504, 2010.

\bibitem{hytonensharpbound}
Tuomas Hyt{\"o}nen.
\newblock The sharp weighted bound for general {C}alder\'on-{Z}ygmund
  operators.
\newblock {\em Ann. of Math. (2)}, 175(3):1473--1506, 2012.

\bibitem{hytonennonhomogeneous}
Tuomas Hyt{\"o}nen.
\newblock The vector-valued nonhomogeneous {$Tb$} theorem.
\newblock {\em Int. Math. Res. Notices IMRN}, online, 2012.

\bibitem{hytonenkairema}
Tuomas Hyt{\"o}nen and Anna Kairema.
\newblock Systems of dyadic cubes in a doubling metric space.
\newblock {\em Colloq. Math.}, 126(1):1--33, 2012.

\bibitem{hytonenmartikainen}
Tuomas Hyt{\"o}nen and Henri Martikainen.
\newblock Non-homogeneous {$Tb$} theorem and random dyadic cubes on metric
  measure spaces.
\newblock {\em J. Geom. Anal.}, 22(4):1071--1107, 2012.

\bibitem{hytonentapiola}
Tuomas Hyt{\"o}nen and Olli Tapiola.
\newblock Almost {L}ipschitz-continuous wavelets in metric spaces via a new
  randomization of dyadic cubes.
\newblock {\em J. Approx. Theory}, 185:12--30, 2014.

\bibitem{hytonenweis}
Tuomas Hyt{\"o}nen and Lutz Weis.
\newblock A {$T1$} theorem for integral transformations with operator-valued
  kernel.
\newblock {\em J. Reine Angew. Math.}, 599:155--200, 2006.

\bibitem{kahane}
Jean-Pierre Kahane.
\newblock {\em Some random series of functions}, volume~5 of {\em Cambridge
  Studies in Advanced Mathematics}.
\newblock Cambridge University Press, Cambridge, second edition, 1985.

\bibitem{kairema}
Anna Kairema.
\newblock Two-weight norm inequalities for potential type and maximal operators
  in a metric space.
\newblock {\em Publ. Mat.}, 57(1):3--56, 2013.

\bibitem{lechnerpassenbrunner}
Richard Lechner and Markus Passenbrunner.
\newblock Adaptive deterministic dyadic grids on spaces of homogeneous type.
\newblock {\em Bull. Pol. Acad. Sci. Math.}, 62(2):139--160, 2014.

\bibitem{ledouxtalagrand}
Michel Ledoux and Michel Talagrand.
\newblock {\em Probability in {B}anach spaces}, volume~23 of {\em Ergebnisse
  der Mathematik und ihrer Grenzgebiete (3) [Results in Mathematics and Related
  Areas (3)]}.
\newblock Springer-Verlag, Berlin, 1991.
\newblock Isoperimetry and processes.

\bibitem{lisun}
Kangwei Li and Wenchang Sun.
\newblock Characterization of a two weight inequality for multilinear
  fractional maximal operators.
\newblock {\em Houston Journal of Mathematics (to appear)}, 2015.
\newblock arxiv: 1312.7707.

\bibitem{maciassegovia}
Roberto~A. Mac{\'{\i}}as and Carlos Segovia.
\newblock Lipschitz functions on spaces of homogeneous type.
\newblock {\em Adv. in Math.}, 33(3):257--270, 1979.

\bibitem{mullerpassenbrunner}
Paul F.~X. M{\"u}ller and Markus Passenbrunner.
\newblock A decomposition theorem for singular integral operators on spaces of
  homogeneous type.
\newblock {\em J. Funct. Anal.}, 262(4):1427--1465, 2012.

\bibitem{rubiodefrancia}
Jos{\'e}~L. Rubio~de Francia.
\newblock Martingale and integral transforms of {B}anach space valued
  functions.
\newblock In {\em Probability and {B}anach spaces ({Z}aragoza, 1985)}, volume
  1221 of {\em Lecture Notes in Math.}, pages 195--222. Springer, Berlin, 1986.

\bibitem{sawyerwheeden}
E.~Sawyer and R.~L. Wheeden.
\newblock Weighted inequalities for fractional integrals on {E}uclidean and
  homogeneous spaces.
\newblock {\em Amer. J. Math.}, 114(4):813--874, 1992.

\bibitem{tanakaterasawa}
Hitoshi Tanaka and Yutaka Terasawa.
\newblock Positive operators and maximal operators in a filtered measure space.
\newblock {\em J. Funct. Anal.}, 264(4):920--946, 2013.

\bibitem{williams}
David Williams.
\newblock {\em Probability with martingales}.
\newblock Cambridge Mathematical Textbooks. Cambridge University Press,
  Cambridge, 1991.

\end{thebibliography}
\bibliographystyle{plain}
\end{document}